\newtheorem{theorem}{Theorem}[section]
\newtheorem{lemma}[theorem]{Lemma}
\newtheorem{proposition}[theorem]{Proposition}
\newtheorem{corollary}[theorem]{Corollary}
\theoremstyle{definition}
\newtheorem{definition}[theorem]{Definition}
\newtheorem{example}[theorem]{Example}
\theoremstyle{remark}
\newtheorem{remark}[theorem]{Remark}
\renewcommand{\vec}[1]{\mathbf{#1}}
\begin{document}

\title{Multiplicative Weights Update as a Distributed Constrained Optimization Algorithm: Convergence to Second-order Stationary Points Almost Always}

\author{Ioannis Panageas\\SUTD\\ioannis@sutd.edu.sg
\and Georgios Piliouras\\SUTD\\georgios@sutd.edu.sg
\and Xiao Wang\\SUTD\\xiao\_wang@sutd.edu.sg
}


\date{}
\maketitle

\begin{abstract}
Non-concave maximization has been the subject of much recent study in the optimization and machine learning communities, specifically in deep learning.
Recent papers  \cite{Ge15},~\cite{LPPSJR17}  and references therein indicate that first order methods work well and avoid saddle points.  Results as in \cite{LPPSJR17}, however, are limited to the \textit{unconstrained} case or for cases where the critical points are in the interior of the feasibility set, which fail to capture some of the most interesting applications. In this paper we focus on \textit{constrained} non-concave maximization. We analyze a variant of a well-established algorithm in machine learning called Multiplicative Weights Update (MWU) for the maximization problem $\max_{\mathbf{x} \in D} P(\mathbf{x})$, where $P$ is non-concave, twice continuously differentiable and $D$ is a product of simplices. We show that MWU converges almost always for small enough stepsizes to critical points that satisfy the second order KKT conditions.
We combine techniques from dynamical systems as well as taking advantage of a recent connection between Baum Eagon inequality and MWU \cite{PPP17}.
\end{abstract}

\section{Introduction}\label{sec:intro}

The interplay between the structure of saddle points and the performance of first order algorithms is a critical aspect of non-concave maximization. In  the \textit{unconstrained} setting, there have been many recent results indicating that gradient descent (GD) avoids strict saddle points with random initialization \cite{LPPSJR17}, (see also \cite{DP18} for the analogue in min-max optimization). Moreover by adding noise, it is guaranteed that GD converges to a local maximum in polynomial time (see \cite{Ge15}, \cite{ChiGe16} and references therein). By adding a non-smooth function in the objective (e.g., the indicator function of a convex set) it can be shown that there are stochastic first order methods that converge to a local minimum point in the \textit{constrained} case \cite{AZ1} \cite{AZ2}\cite{AZ3}\cite{AZ4} under the assumption of oracle access to the stochastic (sub)gradients. What is less understood is the problem of convergence to second order stationary points in \textit{constrained} optimization (under the weaker assumption that we do not have access to the subgradient of the indicator of the feasibility set; in other words when projection to the feasibility set is not a trivial task).
In the case of constrained optimization, we also note that the techniques of \cite{LPPSJR17} are not applicable in a straightforward way.

Non-concave maximization problems with saddle points/local optima on the boundary are very common. For example in game theory, it is typical  for a Nash equilibrium not to have full support (and thus to lie on the boundary of the simplex). In such cases, one natural approach is to use projected gradient descent, but computing the projection at every iteration might not be an easy task to accomplish. Several distributed, concurrent optimization techniques have been studied in such settings (\cite{KPT}, \cite{ABFH}, \cite{DP19}), however they are known to work only for very specific type of optimization problems, i.e., multilinear potential functions.
 Moreover, having saddle points/local optima on the boundary of a closed set that has (Lebesgue) measure zero compared to the full domain (e.g., simplex with $n$ variables has measure zero in $\mathbb{R}^n$) makes impossible to use as a black box the result in \cite{LPPSJR17} in which they make use of well-known Center-stable manifold theorem from the dynamical systems literature  (see Theorem \ref{thm:manifold} in the supplementary material).

\noindent
In this paper we focus on solving problems of the form
\begin{equation}\label{eq:maxopt}
\max_{\mathbf{x} \in D} P(\mathbf{x}),
\end{equation}
where $P$ is a non-concave, twice continuously differentiable function and $D$ is some compact set, which will be a product of simplices for our purposes, i.e., $D=\{(x_{ij})|x_{ij}\ge 0, \sum_{j=1}^M x_{ij}=1\textrm{ for all }1\leq i\leq N\}$, where $N,M$ are natural numbers. As a result, vector $\mathbf{x}$ can be also interpreted as a collection of $N$ probability distributions (having $N$ players), where each distribution $\mathbf{x}_i$ has support of size $M$ (strategies). For this particular problem (\ref{eq:maxopt}), one natural algorithm that is commonly used is the Baum-Eagon dynamics (\ref{eq:Baum_Eagon_dynamics}) (see the seminal paper by Baum and Eagon \cite{BE}) with many applications to inference problems, Hidden Markov Models (HMM) in particular (see also discussion in Section \ref{seq:applications}).
\begin{align}\label{eq:Baum_Eagon_dynamics}
x_{ij}^{t+1} = x_{ij}^{t} \frac{ \frac{\partial P}{\partial x_{ij}}\big|_{\mathbf{x}^t}}{\sum_s x^t_{is}\frac{\partial P}{\partial x_{is}}\big|_{\mathbf{x}^t}},
\end{align}
The denominator of the above fraction is for renormalization purposes (superscript $t$ indicates the iteration). It is clear that as long as $\mathbf{x}^t \in D$ then $\mathbf{x}^{t+1} \in D$.

Despite its power, Baum-Eagon dynamics has its limitations. First and foremost, the Baum-Eagon dynamics is not always well-defined; the denominator term $\sum_s x^t_{is}\frac{\partial P}{\partial x_{is}}\big|_{\mathbf{x}^t}$ must be non-zero at all times and moreover the fraction in equations (\ref{eq:Baum_Eagon_dynamics}) should always be non-negative. This provides a restriction to the class of functions $P$ to which the Baum-Eagon dynamics can be applied. Moreover, it turns out that the update rule of the Baum-Eagon dynamics is not always a diffeomorphism.\footnote{A function is called a diffeomorphism if it is differentiable and a bijection and its inverse is differentiable as well.} In fact, as we show even in simple settings (see section 2.3) the Baum-Eagon dynamics may not be even a homeomorphism or one-to-one. This counterexample disproves a conjecture by Stebe\cite{S}. Since the map is not even a local diffeomorphism one cannot hope to leverage the power of Center-stable manifold theorem to argue convergence towards local maxima.

To counter this, in this paper we  focus on multiplicative weights update algorithm (MWU) \cite{Arora1} which can be interpreted as an instance of Baum-Eagon dynamics in the presence of learning rates.
 Introducing learning rates gives us a lot of flexibility and will allow us to formally prove strong convergence properties which would be impossible without this adaptation.
 Assume that $\mathbf{x}^t$ is the $t$-th iterate of MWU, the equations of which can be described as follows:
\begin{align}\label{eq:MWUdynamics}
x_{ij}^{t+1} = x_{ij}^{t} \frac{1+\epsilon_i \frac{\partial P}{\partial x_{ij}}\big|_{\mathbf{x}^t}}{1+\epsilon_i\sum_s x^t_{is}\frac{\partial P}{\partial x_{is}}\big|_{\mathbf{x}^t}},
\end{align}
where $\epsilon_i$ the \textit{stepsize} (learning rate) of the dynamics. Intuitively (in game theory terms), for strategy profile (vector) $\tilde{\mathbf{x}} := (\tilde{\mathbf{x}}_1,...\tilde{\mathbf{x}}_N)$, each player $i$ that chooses strategy $j$ has utility to be $\frac{\partial P}{\partial x_{ij}} \big |_{\mathbf{x} = \tilde{\mathbf{x}}}$. We call a strategy profile $\mathbf{y} \in D$ a fixed point if it is invariant under the update rule dynamics (\ref{eq:MWUdynamics}).  It is also clear that the set $D$ is invariant under the dynamics in the sense that if $\mathbf{x}^t \in D$ then $\mathbf{x}^{t+1} \in D$ for $t \in \mathbb{N}$. This last observation indicates that MWU has the projection step for free (compared to projected gradient descent). We would also like to note that MWU can be computed in a distributed manner and this makes the algorithm more important for Machine Learning applications.

\paragraph{Statement of our results}



We will need the following two definitions (well-known in optimization literature, as applied to simplex constraints):

\begin{definition}[Stationary point] $\mathbf{x}^*$ is called a stationary point as long as it satisfies the first order KKT conditions for the problem (\ref{eq:maxopt}). Formally, it holds
\begin{equation}\label{eq:KKT}
\begin{array}{l}
\mathbf{x}^* \in D\\
x_{ij}^*>0 \Rightarrow \frac{\partial P}{\partial x_{ij}}(\mathbf{x}^*) = \sum_{j'}x_{ij'}^*\frac{\partial P}{\partial x_{ij'}}(\mathbf{x}^*)\\
x_{ij}^*=0 \Rightarrow \frac{\partial P}{\partial x_{ij}}(\mathbf{x}^*) \leq \sum_{j'}x_{ij'}^*\frac{\partial P}{\partial x_{ij'}}(\mathbf{x}^*).
\end{array}
\end{equation}
The stationary point is called strict if the last inequalities hold strictly.
\end{definition}

\begin{definition}[Second order stationary point] $\mathbf{x}^*$ is called a second order stationary point as long as it is a stationary point and moreover it holds that:
\begin{equation}\label{eq:KKTsecondorder}
\begin{array}{l}
\mathbf{y}^{\top}\nabla^2 P(\mathbf{x}^*)\mathbf{y}\leq 0.
\end{array}
\end{equation}
for all $\mathbf{y}$ such that $\sum_{j=1}^M y_{ij}=0$ (for all $1\leq i \leq N$) and $y_{ij}=0$ whenever $x^*_{ij}=0$, i.e., it satisfies the second order KKT conditions.
\end{definition}
Our main result are stated below:
\begin{theorem}[Avoid non-stationary]\label{thm:main2} Assume that $P$ is twice continuously differentiable in a set containing $D$. There exists small enough fixed stepsizes $\epsilon_i$ such that the set of initial conditions $\mathbf{x}^0$ of which the MWU dynamics (\ref{eq:MWUdynamics}) converges to fixed points that violate second order KKT conditions is of (Lebesgue) measure zero.
\end{theorem}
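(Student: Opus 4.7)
The plan is to apply the Center-Stable Manifold Theorem (\ref{thm:manifold}) to the MWU update map $g:D\to D$ defined by the right-hand side of (\ref{eq:MWUdynamics}). The theorem will imply that around any fixed point $\mathbf{x}^*$ at which the Jacobian $Dg(\mathbf{x}^*)$ has at least one eigenvalue of modulus strictly greater than $1$, the local center-stable set is contained in an embedded $C^1$ submanifold of dimension strictly less than $\dim D$, and hence has Lebesgue measure zero inside $D$. Combined with a Lindel\"of-style covering argument and the fact that $g$ is a local diffeomorphism, this will show that the set of initial conditions whose forward orbit converges to any such ``bad'' fixed point is measure zero. The two core tasks are therefore (i) to establish that $g$ is a local diffeomorphism on $D$ for sufficiently small $\epsilon_i$, and (ii) to exhibit, at every fixed point violating the second-order KKT conditions, a Jacobian eigenvalue of modulus $>1$.

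For (i), using compactness of $D$ and continuous differentiability of $\nabla P$, I would choose $\epsilon_i$ small enough that the denominators $1+\epsilon_i\sum_s x_{is}\partial_{is}P$ and numerators $1+\epsilon_i\partial_{ij}P$ are uniformly positive on $D$, so that $g$ is a well-defined $C^1$ self-map of $D$. A direct calculation then shows that in appropriate coordinates on the product of simplices, $Dg(\mathbf{x})=I+\epsilon\,A(\mathbf{x})$ for a matrix $A$ uniformly bounded on $D$; for $\epsilon_i$ small, $\det Dg\ne 0$ everywhere, and hence $g$ is a local diffeomorphism.

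For (ii), I would split according to which part of the KKT conditions fails. If $\mathbf{x}^*$ is a fixed point of $g$ that already violates the first-order KKT inequality, then there is a pair $(i,j)$ with $x_{ij}^*=0$ and $\partial_{ij}P(\mathbf{x}^*)>\sum_s x_{is}^*\partial_{is}P(\mathbf{x}^*)$. Differentiating (\ref{eq:MWUdynamics}) with respect to $x_{ij}$ at $\mathbf{x}^*$ along the invariant axis where the other coordinates remain at the fixed point yields the eigenvalue $\bigl(1+\epsilon_i\partial_{ij}P(\mathbf{x}^*)\bigr)/\bigl(1+\epsilon_i\sum_s x_{is}^*\partial_{is}P(\mathbf{x}^*)\bigr)>1$, the required unstable direction. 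If instead $\mathbf{x}^*$ is a first-order KKT point but the Hessian admits a direction $\mathbf{y}$ with $\mathbf{y}^{\top}\nabla^2 P(\mathbf{x}^*)\mathbf{y}>0$, $\sum_j y_{ij}=0$, and $y_{ij}=0$ whenever $x_{ij}^*=0$, then I would expand $Dg(\mathbf{x}^*)$ restricted to this active-face tangent subspace. Using the first-order KKT identity $\partial_{ij}P(\mathbf{x}^*)=\sum_s x_{is}^*\partial_{is}P(\mathbf{x}^*)$ on the active support, the cross terms coming from the denominator cancel and the block reduces to $I+\epsilon\,X^*\nabla^2 P(\mathbf{x}^*)+o(\epsilon)$, with $X^*$ the diagonal matrix of the active $x_{ij}^*$'s. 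This is the Baum-Eagon-type cancellation underlying \cite{PPP17}, and it transfers the positivity of $\mathbf{y}^{\top}\nabla^2 P(\mathbf{x}^*)\mathbf{y}$ to an eigenvalue $1+\Theta(\epsilon)>1$ of $Dg(\mathbf{x}^*)$.

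Having secured an unstable eigenvalue at every bad fixed point, Theorem \ref{thm:manifold} produces, around each such $\mathbf{x}^*$, a neighborhood $U$ in which the center-stable set $W^{cs}_{loc}(\mathbf{x}^*)$ is a $C^1$ submanifold of dimension $<\dim D$ and therefore a null set in $D$. Since $D$ is second countable, the closed set of bad fixed points admits a countable subcover by such neighborhoods; any orbit converging to a bad fixed point eventually enters and remains in one of them, so it lies in some $g^{-t}(W^{cs}_{loc})$. Because $g$ is a local diffeomorphism, each such preimage is a countable union of null sets, and the countable union over $t\in\mathbb{N}$ and the covering index is still null, proving the theorem. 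The main obstacle I anticipate is the algebra in the second subcase of (ii): bookkeeping the block decomposition of $Dg(\mathbf{x}^*)$ that separates active-face directions from transverse ones, and verifying via the Baum-Eagon identity that the indefiniteness of $\nabla^2 P$ is faithfully transmitted, without spurious cancellations, to the spectrum of $Dg(\mathbf{x}^*)$ outside the closed unit disk.
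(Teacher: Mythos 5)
Your overall plan matches the paper's: show the MWU update is a diffeomorphism for small step sizes (paper's Theorem~\ref{thm:diffeomorphism}), show that fixed points violating first- or second-order KKT have Jacobian eigenvalues outside the closed unit disc (paper's Lemmas~\ref{lem:unstable},~\ref{lem:unstable2}), then invoke the Center-Stable Manifold theorem plus a countable covering. Your first-order subcase and the final measure-zero argument track the paper closely, and your local-diffeomorphism argument (rather than the paper's global one via properness) is adequate for the Center-Stable Manifold theorem and for the preimage-of-a-null-set step.

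The gap is in the second-order subcase. Your claimed reduction of $Dg(\mathbf{x}^*)$ on the active-face tangent subspace to $I+\epsilon X^*\nabla^2 P(\mathbf{x}^*)+o(\epsilon)$ is false: the term produced by differentiating the normalizing denominator $S_i=1+\epsilon_i\sum_s x_{is}\partial_{is}P$ does not cancel under the first-order KKT identity; it contributes at order $\epsilon$ a piece of the form $-\epsilon X^*D_{xx}\nabla^2 P$, where $D_{xx}$ has, within player-$i$'s block, every row equal to $(x^*_{i1},\ldots,x^*_{iM})$. This term is precisely what keeps the linearization tangent to the product of simplices, and it cannot be dropped. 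The correct first-order form is $I+\epsilon X^*(I-D_{xx})\nabla^2 P+O(\epsilon^2)$, and $X^*(I-D_{xx})\nabla^2 P$ is not similar to $X^*\nabla^2 P$. The paper instead performs the similarity transform by $D_{xs}^{1/2}$ and evaluates the resulting quadratic form at $\mathbf{z}'=D_{xs}^{-1/2}\mathbf{z}$, using that $\mathbf{z}^\top D_{xx}=0$ whenever $\sum_j z_{ij}=0$ (per-player zero-sum tangent vectors annihilate $D_{xx}$ on the left), together with Sylvester's law of inertia, to extract a positive eigenvalue from $\mathbf{z}^\top\nabla^2 P(\mathbf{x}^*)\mathbf{z}>0$. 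You flagged this step as your ``main obstacle'' and left it unresolved; as written it relies on a cancellation that does not happen, so the proof is incomplete until the $(I-D_{xx})$ factor is handled along these or similar lines.

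A secondary point: because $D$ is Lebesgue-null in $\mathbb{R}^{NM}$, the measure-zero conclusion must be stated and proved in coordinates on $D$ (the paper projects out one coordinate per player and works with the projected Jacobian throughout). You allude to ``appropriate coordinates'' for the Jacobian, but the covering argument at the end should explicitly be carried out in those reduced coordinates, as in the paper's construction of $T_{\mathbf{y}}$ and $D_{\mathbf{y}}$.
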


\noindent
The following corollary is immediate from Theorem \ref{thm:main2} and the Baum-Eagon inequality for rational functions (see Section \ref{sec:baumeagon}).
\begin{corollary}\label{cor:main2} Assume $\mu$ is a measure that is absolutely continuous with respect to the Lebesgue measure and $P$ is a rational function (fraction of polynomials) that is twice continuously differentiable in a set containing $D$, with isolated\footnote{A stationary point is isolated if there exists a neighborhood around it so that there is no other stationary point in that neighborhood.} stationary points. It follows that with probability one (randomness induced by $\mu$), MWU dynamics converges to second order stationary points.
\end{corollary}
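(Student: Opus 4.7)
The plan is to combine the Baum-Eagon inequality for rational functions (developed in Section \ref{sec:baumeagon}) with Theorem \ref{thm:main2}. The Baum-Eagon inequality ensures that along every MWU trajectory the sequence $P(\mathbf{x}^t)$ is monotone non-decreasing, and is strictly increasing unless $\mathbf{x}^t$ is already a fixed point of the update rule (\ref{eq:MWUdynamics}). Since $P$ is continuous on the compact set $D$, the sequence $P(\mathbf{x}^t)$ converges to some $L$, and telescoping gives $P(\mathbf{x}^{t+1})-P(\mathbf{x}^t)\to 0$. A standard compactness argument then shows that the $\omega$-limit set of any orbit is nonempty, invariant under MWU, and contained in the set $\mathrm{Fix}$ of fixed points of (\ref{eq:MWUdynamics}).

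Next, I would upgrade "accumulation at $\mathrm{Fix}$" to "convergence to a single second-order stationary point". The set $\mathrm{Fix}$ decomposes into (a) first-order KKT points in the sense of (\ref{eq:KKT}), and (b) boundary points at which some coordinate vanishes but the corresponding KKT inequality is strictly violated. Points of type (b), together with first-order KKT points that fail (\ref{eq:KKTsecondorder}), are precisely the fixed points violating the second-order KKT conditions, and Theorem \ref{thm:main2} guarantees that the set of initial conditions whose MWU trajectory converges to any such point has Lebesgue measure zero, and hence $\mu$-measure zero by absolute continuity of $\mu$. Thus on a $\mu$-full-measure set of initial conditions, every fixed point in the $\omega$-limit is a second-order KKT point, which is isolated by hypothesis.

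To conclude that the orbit actually converges rather than merely accumulates, I would invoke the strict Lyapunov structure. If the $\omega$-limit contained two distinct isolated fixed points $p_1\neq p_2$, one could extract a subsequence $\mathbf{x}^{s_k}$ sitting in a small neighborhood $U$ of $p_1$ whose successor $\mathbf{x}^{s_k+1}$ leaves $U$. By compactness pass to a further subsequence with $\mathbf{x}^{s_k}\to q\in \overline{U}$; continuity of the MWU map together with $P(\mathbf{x}^{t+1})-P(\mathbf{x}^t)\to 0$ force $q$ to be a fixed point with $q\neq p_1$ yet inside an arbitrarily small neighborhood of $p_1$, contradicting isolation. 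Hence the $\omega$-limit is a single point, which by the previous paragraph is almost surely a second-order stationary point, completing the proof.

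The main obstacle I anticipate is precisely this last step, the passage from "$\omega$-limit lies in $\mathrm{Fix}$" to "trajectory converges to a single fixed point." In the continuous-time ODE setting, LaSalle's invariance principle plus connectedness of $\omega$-limit sets makes this automatic, but for a discrete iteration the $\omega$-limit need not be connected. One must genuinely combine the strictly monotone Lyapunov structure supplied by the Baum-Eagon inequality, continuity of the MWU update, and the isolatedness assumption to pin down a unique limit; everything else in the argument is a relatively routine consequence of Theorem \ref{thm:main2} and absolute continuity of $\mu$.
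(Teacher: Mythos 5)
Your approach matches the paper's intended route exactly: the paper states the corollary is ``immediate from Theorem~\ref{thm:main2} and the Baum-Eagon inequality for rational functions'' and offers no further proof, while the ``Our techniques'' paragraph confirms that the first step is to use Baum-Eagon to establish convergence of MWU to fixed points, after which Theorem~\ref{thm:main2} weeds out the bad ones. You have correctly identified that the substantive content hiding in ``immediate'' is precisely the passage from ``$\omega$-limit lies in the fixed-point set'' to ``trajectory converges to a single point,'' and your Lyapunov-plus-isolation argument for this step is sound: if $\mathbf{x}^{s_k}\in U$ but $\mathbf{x}^{s_k+1}\notin U$ and $\mathbf{x}^{s_k}\to q\in\overline U$, then $T(q)=\lim\mathbf{x}^{s_k+1}\in D\setminus U$ while strictness of the Baum-Eagon inequality forces $T(q)=q$; so $q\in\partial U$ is a fixed point distinct from $p_1$, contradicting isolation once $U$ is small enough.

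Two points are worth tightening, and both are glossed over by the paper itself. First, the order of quantifiers in your second and third paragraphs should be reversed: Theorem~\ref{thm:main2} controls the set of initial conditions whose trajectory \emph{converges} to a bad fixed point (the proof of Theorem~\ref{thm:main2} literally defines $W_{\mathbf{y}}$ via $\lim_t\mathbf{x}^t=\mathbf{y}$), not those whose $\omega$-limit merely \emph{contains} one. So you should first run your isolation argument to conclude, for \emph{every} initial condition, that the $\omega$-limit is a single fixed point, and only then invoke Theorem~\ref{thm:main2} to say that this unique limit is almost surely a second-order KKT point. Second, the isolation argument needs the fixed points appearing in the $\omega$-limit to be isolated, but the hypothesis only asserts that \emph{stationary} points are isolated. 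Fixed points of MWU that violate the first-order KKT inequality at some zero coordinate need not be stationary points, and a priori can form a continuum on a face of $D$; the stated hypothesis does not rule this out, so either the corollary should be read as assuming isolated \emph{fixed} points, or one needs an auxiliary argument (e.g., using the strict repulsion in the violated coordinate) to exclude non-stationary fixed points from the $\omega$-limit. Since the paper provides no proof at all, you are ahead of it here, but these are the two places a referee would press.
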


\begin{remark}
It is obvious that when the learning rates $\epsilon_i=0$, MWU (\ref{eq:MWUdynamics}) is trivially the identity map. On the other hand, whenever the dynamics is well defined in the limit $\epsilon\rightarrow \infty$ (i.e. when $P$ is sufficiently well behaved, e.g. a polynomial with positive coefficients) this corresponds to the well known class of Baum-Eagon maps \cite{S}.
\end{remark}

We conclude our results by showing that it is unlikely that MWU dynamics converges fast to second (or even first) order stationary points when MWU is applied to solve problem (\ref{eq:maxopt}). The problem of finding first (resp. second) order stationary points are inherently connected with the problem of finding mixed (resp. pure) Nash equilibria in congestion games. Currently, no polynomial time algorithms are known for computing mixed Nash in congestion games (the problem lies between P and CLS\footnote{CLS is a computational complexity class that captures continuous local search. It lies on the intersection of the mores well studied classes of PLS and PPAD.}) \cite{Daskalakis:2011:CLS:2133036.2133098}, whereas computing pure Nash Nash equilibria even in linear congestion games,  is known to be PLS-complete \cite{Fabrikant:2004:CPN:1007352.1007445,Ackermann:2008:ICS:1455248.1455249}. The reductions between the problems is based on the fact that congestion games are potential games and hence (\ref{eq:MWUdynamics}) captures the behavior of self-interested learning agents playing a congestion game.




\paragraph{Our techniques}
The first step of the proof given in Section 3 is to prove that  MWU converges to fixed points for all rational functions and any possible set of learning rates (as long as the dynamics is well defined). The proof of this statement leverages recently discovered connections between MWU and the Baum-Eagon dynamics \cite{PPP17}. However, this does not even allow us to exclude very suboptimal fixed points (i.e. saddle points or even local minima) from having a positive region of attraction.

The other two steps of the proof work on weeding out the "bad" stationary points and showing that the set of initial conditions that converge to them is of measure zero. The key tool for proving that type of statements is the Center-stable manifold theorem \cite{LPPSJR17}. However, in order to leverage the power of the theorem we  first show in Theorem 2.3  that for small enough learning rates MWU is a diffeomorphism. The second and third step of the proof respectively is to show that fixed points that do not satisfy the first (resp. second) order stationary point conditions are unstable under MWU.

Even for the first step of the proof (lemma 3.1), we have to use ad-hoc techniques to deal with problems due to the constraints. Specifically, we start by projecting the domain $D$ to a subspace that is full dimensional (for example simplex of size $n$ is mapped to the Euclidean subspace of dimension $n-1$). Next, we show that non-first order  stationary points result to fixed points where the Jacobian of MWU has eigenvalue larger than $1$. Proving a similar statement for the fixed points that correspond to non-second order stationary fixed points (lemma 3.2) is the most technical part of the proof as we have to deal with the asymmetry of the resulting Jacobian. Nevertheless we manage to do so by using Sylvester's law of inertia and exploiting newly discovered decompositions for this class of matrices. Putting everything together results in our main theorem (Theorem 1.3).

\textbf{Notation} Throughout this article, $D$ is the product of $N$ simplices of size $M$ each,
$D=\{(x_{ij})|x_{ij}\ge 0, \sum_{j=1}^M x_{ij}=1\textrm{ for all }1\leq i\leq N\}, $ where we interpret $i$ as the index for the $N$ agents and $j$ the index of strategies $M$. We also use boldfaces to denote vectors, i.e., $\mathbf{x}$ and $[N]$ denotes $\{1,...,N\}$.

\section{Optimization with Baum-Eagon Algorithm}\label{sec:baumeagon}
In this section, we state the important result of Baum and Eagon providing a method to increase the value of a polynomial with nonnegative coefficients and (later generalized for) rational functions with nonzero denominators. The update rule defined by (\ref{eq:BEmap}) increases the value of the polynomial $P$ if the initial point is not a fixed point of Baum-Eagon dynamics.

\subsection{Baum-Eagon map}
Let $P$ be a polynomial with real positive coefficients and variables $x_{ij}$, $i=1,...,k,j=1,...,n_i$. Let $n=\sum_{i=1}^{k} n_i$. Let $D$ be the product of simplexes.
 Define $\vec{x}' := T(\mathbf{x})$ as the vector in $D$ with component $ij$ given by
 \begin{equation}\label{eq:BEmap}
 x'_{ij} = T(\vec{x})_{ij} :=\frac{x_{ij}\frac{\partial P}{\partial x_{ij}}}{\sum_{h=1}^{n_i}x_{ih}\frac{\partial P}{\partial x_{ih}}}.
 \end{equation}
\begin{theorem}[\cite{BE}]
Let $P(\{x_{ij}\})$ be a polynomial with non-negative coefficients homogeneous of degree $d$ in its variables $\{x_{ij}\}$. Let $\vec{x}=\{x_{ij}\}$ be any point of the domain $D=\{x_{ij}\ge 0, \sum_{j=1}^{n_i}x_{ij}=1,i=1,2,...,k,j=1,2,...,n_i\}$. For $\mathbf{x}=\{x_{ij}\}\in D$, let $T(\mathbf{x})=T(\{x_{ij}\})$ be the point of $D$ whose $i,j$ coordinate is
\begin{equation} \label{eq1}
 T(\vec{x})_{ij}=\frac{x_{ij}\frac{\partial P}{\partial x_{ij}}}{\sum_{h=1}^{n_i}x_{ih}\frac{\partial P}{\partial x_{ih}}}.
 \end{equation}
 Then $P(T(\mathbf{x}))>P(\mathbf{x})$ unless $T(\mathbf{x})=\mathbf{x}$.
\end{theorem}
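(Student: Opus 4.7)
The plan is to use a Jensen-style / EM-style argument. First I expand $P(\mathbf{x}) = \sum_\alpha c_\alpha \prod_{i,j} x_{ij}^{\alpha_{ij}}$ over multi-indices $\alpha$ with $|\alpha|=d$ and $c_\alpha \geq 0$. For $\mathbf{x}$ with $P(\mathbf{x})>0$, I define the probability weights
$w_\alpha(\mathbf{x}) := c_\alpha \prod_{i,j} x_{ij}^{\alpha_{ij}} / P(\mathbf{x})$
over monomials. These satisfy the key identity
$\sum_\alpha w_\alpha(\mathbf{x})\alpha_{ij} = x_{ij}\,\partial P/\partial x_{ij}(\mathbf{x}) / P(\mathbf{x})$,
which exhibits the Baum--Eagon numerator $x_{ij}\,\partial_{ij}P$ as an expectation under $w_\alpha$ and will tie the update rule to a maximization of an auxiliary function.

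Next I introduce the auxiliary function $Q_\mathbf{x}(\mathbf{y}) := \sum_\alpha w_\alpha(\mathbf{x})\sum_{i,j}\alpha_{ij}\log y_{ij}$, viewed as a function of $\mathbf{y}\in D$. A Lagrange--multiplier calculation on each simplex constraint $\sum_j y_{ij}=1$ shows that the unique maximizer of $Q_\mathbf{x}$ over $D$ is $\mathbf{y}=T(\mathbf{x})$: the stationarity condition $\frac{1}{y_{ij}}\sum_\alpha w_\alpha(\mathbf{x})\alpha_{ij} = \lambda_i$ combined with the constraint reproduces exactly the update (\ref{eq1}). Applying Jensen's inequality to the concave function $\log$ then yields
\begin{align*}
\log \frac{P(T(\mathbf{x}))}{P(\mathbf{x})}
&= \log \sum_\alpha w_\alpha(\mathbf{x}) \prod_{i,j}\left(\frac{T(\mathbf{x})_{ij}}{x_{ij}}\right)^{\alpha_{ij}} \\
&\geq \sum_\alpha w_\alpha(\mathbf{x}) \sum_{i,j}\alpha_{ij}\log \frac{T(\mathbf{x})_{ij}}{x_{ij}} = Q_\mathbf{x}(T(\mathbf{x})) - Q_\mathbf{x}(\mathbf{x}) \geq 0,
\end{align*}
where the final inequality uses that $T(\mathbf{x})$ maximizes $Q_\mathbf{x}$. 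This gives $P(T(\mathbf{x})) \geq P(\mathbf{x})$.

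The strict inequality follows from the equality case: if $P(T(\mathbf{x}))=P(\mathbf{x})$, then both Jensen's inequality and $Q_\mathbf{x}(T(\mathbf{x})) \geq Q_\mathbf{x}(\mathbf{x})$ must be tight, and uniqueness of the $Q_\mathbf{x}$ maximizer already forces $T(\mathbf{x})=\mathbf{x}$. The main obstacle I anticipate is the boundary of $D$, where some $x_{ij}=0$ and $\log y_{ij}$ would diverge: there one must restrict attention to monomials $\alpha$ whose support lies inside $\{(i,j):x_{ij}>0\}$ (the others contribute neither to $P(\mathbf{x})$ nor to the active numerators of $T$) and run the same argument on the effective face of $D$ containing $\mathbf{x}$; a continuity/limit argument then recovers the full statement without excluding boundary fixed points.
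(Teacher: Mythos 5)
The paper does not prove this theorem; it is stated and cited directly to Baum--Eagon \cite{BE}, so there is no in-paper proof to compare against. Your argument is, up to presentation, the classical auxiliary-function (EM-style) proof that Baum--Eagon and Baum--Welch use, so you are on the canonical route and the argument is essentially correct. Two small refinements are worth recording. First, the phrase ``uniqueness of the $Q_{\mathbf x}$ maximizer'' is not literally accurate: $Q_{\mathbf x}(\mathbf y)=\sum_{i,j} a_{ij}\log y_{ij}$ with $a_{ij}:=\sum_\alpha w_\alpha(\mathbf x)\alpha_{ij}=x_{ij}\partial_{ij}P/P(\mathbf x)$ is insensitive to the coordinates where $a_{ij}=0$, so the maximizer is only unique in the active coordinates. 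The strict gap you want follows more cleanly from the identity
\[
Q_{\mathbf x}\bigl(T(\mathbf x)\bigr)-Q_{\mathbf x}(\mathbf x)
=\sum_i \frac{S_i}{P(\mathbf x)}\,\mathrm{KL}\!\left(T(\mathbf x)_{i\cdot}\,\big\|\,x_{i\cdot}\right),
\qquad S_i:=\sum_h x_{ih}\tfrac{\partial P}{\partial x_{ih}},
\]
which is nonnegative and vanishes iff $T(\mathbf x)_{i\cdot}=x_{i\cdot}$ for every $i$; one also checks that agreement on the active coordinates forces $x_{ij}=0$ (hence $T(\mathbf x)_{ij}=0$) on the inactive ones, so $T(\mathbf x)=\mathbf x$. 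Second, no limit/continuity argument is needed at the boundary: with the convention $0\log 0=0$ the inactive coordinates drop out of $Q_{\mathbf x}$ altogether, and whenever $a_{ij}>0$ one automatically has $x_{ij}>0$ and $T(\mathbf x)_{ij}>0$, so every logarithm that actually appears is finite; moreover Euler's identity $\sum_{i,j}x_{ij}\partial_{ij}P=d\,P$ shows that $P(\mathbf x)=0$ would force every $S_i=0$ and hence $T$ undefined, so $P(\mathbf x)>0$ is implicit in the hypothesis that $T(\mathbf x)$ exists.
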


\subsection{ Optimization for rational functions}
      According to \cite{GKNN}, one can define a Baum-Eagon dynamics for rational functions $R(\vec{x})=\frac{S_1(\vec{x})}{S_2(\vec{x})}$ with positive denominator so that the update rule of the Baum-Eagon dynamics increases the value of the rational function $R$ for any given vector $\vec{y}$ unless $\vec{y}$ is a fixed point. This can be done by starting with the Baum-Eagon map of the following polynomial: Let $\vec{y} \in D$ be an arbitrary point.
\[
Q_{\vec{y}}(\vec{x})=P_{\vec{y}}(\vec{x})+C_{\vec{y}}(\vec{x}),
\]
where
\[
P_{\vec{y}}(\vec{x})=S_1(\vec{x})- R(\vec{y})\cdot S_2(\vec{x})
\]
and
\[
C_{\vec{y}}(\vec{x})=N_{\vec{y}}(\sum _{ij}x_{ij}+1)^d,
\]
where $d$ is the degree of $P_{\vec{y}}(\vec{x})$ and $N_{\vec{y}}$ is a constant such that $P_{\vec{y}}(\vec{x})+C_{\vec{y}}(\vec{x})$ only has nonnegative coefficients.
\\
\\
It is proved in \cite{GKNN} that $R(T(\vec{y})) > R(\vec{y})$ along the Baum-Eagon dynamics (update rule $T$) induced by polynomial $Q_{\vec{y}}(\vec{x})$.



\subsection{Bad example on Baum-Eagon dynamics}
L. Baum has an unpublished result \cite{S} claiming that the Baum-Eagon map $T$ is a homeomorphism\footnote{A function is called a homeomorphism if it is continuous and a bijection and its inverse is continuous as well. Thus if a function is not a homeomorphism, then it is not a diffeomorphism.} of $D$ onto itself if and only if the polynomial $P$ can be expressed as a sum that \textit{contains} monomials of the form $c_{i,j}x_{i,j}^{w_{i,j}}$ for all $i=1,...,k,j=1,...,n_i$ where $c_{i,j}>0$ and $w_{i,j}$ is an integer greater than zero (this means that $P$ might also contain other terms, i.e, products of different variables).
But this condition is incorrect and we give a counter example below. We note that our example indicates that the Baum-Eagon dynamics does not satisfy the nice property of being a diffeomorphism.

For a special case, we focus on the map $\tau$ defined on a single simplex (with $n$ variables)
\[
\Delta_{n-1}=\{(x_1,...,x_n)|\sum_{i=1}^{n}x_i=1\},
\]
 and $\tau$ can be written as
\begin{equation}\label{eq2}
x_i'= \tau (\vec{x})_i:=\frac{x_i\frac{\partial P}{\partial x_i}}{\sum x_i\frac{\partial P}{\partial x_i}}
\end{equation}
The map defined in equation (\ref{eq2}) can be expressed as a composition of $\tau_1$ and $\tau_2$ defined in the following way:
\begin{align}
 \tau_1&:(x_1,...,x_n)\mapsto(x_1\frac{\partial P}{\partial x_1},...,x_n\frac{\partial P}{\partial x_n})\label{eq3}\\
 \tau_2&:(x_1\frac{\partial P}{\partial x_1},...,x_n\frac{\partial P}{\partial x_n})\mapsto \frac{1}{\sum_{i=1}^n x_i\frac{\partial P}{\partial x_i}}(x_1\frac{\partial P}{\partial x_1},...,x_n\frac{\partial P}{\partial x_n})\label{eq4}
\end{align}
Consider 1-dimensional simplex as an example (i.e, $n=2$), $\tau_1$ maps the simplex $\Delta_1$ to a curve and $\tau_2$ maps points on the curve back to $\Delta_1$ by scaling.
\begin{figure}[h]\label{figall}
\centering
\begin{subfigure}[b]{0.3\textwidth}
\includegraphics[width=\textwidth]{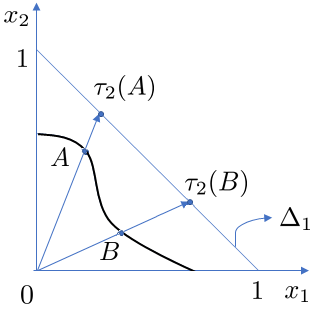}
\caption{$\tau=\tau_2\circ\tau_1$}
\label{fig1}
\end{subfigure}
\begin{subfigure}[b]{0.3\textwidth}
\includegraphics[width=\textwidth]{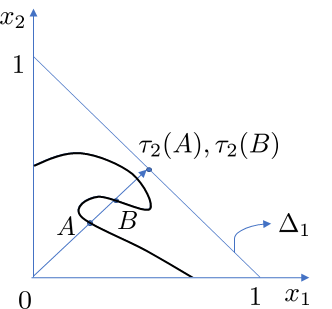}
\caption{$\tau=\tau_2\circ\tau_1$}. 
\label{fig2}
\end{subfigure}
\caption{Illustration}
\end{figure}
From Figure \ref{fig1}, we notice that a necessary condition for $\tau$ to be a homeomorphism is that the curve $\tau_1 (\Delta_1)$ (image of $\Delta_1$ under $\tau_1$, see thick, black curve in Figure \ref{fig1}, \ref{fig2}) does not cross twice (or more times) any line that passes through the origin and has slope non-negative (see also Figure \ref{fig2}). A necessary condition for $\tau$ to be a homeomorphism is that $\tau$ must be one to one. In 1-dimensional case, the ratio
\[
k=x_1\frac{\partial P}{\partial x_1}/x_2\frac{\partial P}{\partial x_2}
\]
must be monotone with respect to $x_1$. The following example is a polynomial that satisfies Baum's condition, however it holds that function $k$ is not monotone with respect to $x_1$.
\begin{example}
Suppose $P=x_1+x_1^7x_2+x_2^7$, then

\begin{align*}
x_1\frac{\partial P}{\partial x_1}&=x_1+7x_1^7x_2 \\
x_2\frac{\partial P}{\partial x_2}&=x_1^7x_2+7x_2^7
\end{align*}
As it is shown in Figure \ref{CountEx}, the ratio  $k=x_1\frac{\partial P}{\partial x_1}/x_2\frac{\partial P}{\partial x_2}$ is not monotone with respect to $x_1$. So the Baum-Eagon map is not one to one implying that it is not a homeomorphism.
\begin{figure}[h]
\centering
\includegraphics[width=0.5\textwidth]{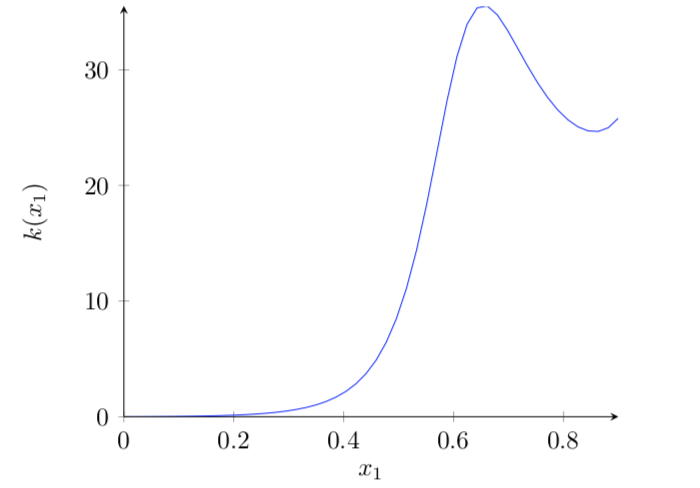}
\caption{Non-monotonicity of $k(x_1)$}
\label{CountEx}
\end{figure}

\end{example}

\subsection{Baum-Eagon map of $\sum_{i,j}x_{ij}+\epsilon P$}
Let $P$ be a twice continuously differentiable function on the product of simplexes $D$. The update rule of the Baum-Eagon dynamics for the function $Q := \sum_{i,j}x_{ij}+\epsilon P$ (as defined in (\ref{eq:Baum_Eagon_dynamics})) is a diffeomorphism for $\epsilon$ sufficiently small (we note that Baum-Eagon dynamics for $Q$ coincides with the MWU dynamics for $P$, see Equations (\ref{eq:MWUdynamics})). This is what next theorem captures.

\begin{theorem}\label{thm:diffeomorphism}
For any twice continuously differentiable function $P$, there exists a positive number $\delta$ depending on $P$, such that for any $\epsilon<\delta$, the Baum-Eagon map applied to $Q=\sum_{ij}x_{ij}+\epsilon P$ is a diffeomorphism.
\end{theorem}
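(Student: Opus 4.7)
My plan is to view $F_\epsilon$, the Baum--Eagon map of $Q = \sum_{ij} x_{ij} + \epsilon P$ (equivalently MWU (\ref{eq:MWUdynamics}) applied to $P$), as a $C^2$-small perturbation of the identity on the compact set $D$ and extract the diffeomorphism property from three ingredients: invertibility of the Jacobian via the inverse function theorem, global injectivity via a mean-value bound, and surjectivity via a topological-degree argument that exploits the fact that MWU preserves every face of $D$.

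First, parametrize $D$ by the coordinates $x_{ij}$ with $j=1,\dots,M-1$ per player, setting $x_{iM} = 1 - \sum_{j<M} x_{ij}$, and write $\tilde D \subset \mathbb{R}^{N(M-1)}$ for the full-dimensional copy with $F_\epsilon:\tilde D\to\tilde D$ the induced map. Since $P$ is $C^2$ on a neighborhood of $D$ and $D$ is compact, $\|\nabla P\|_\infty$ and $\|\nabla^2 P\|_\infty$ on $D$ are finite, so for $\epsilon$ small enough each denominator $1+\epsilon\sum_s x_{is}\,\partial P/\partial x_{is}$ and each factor $1+\epsilon\,\partial P/\partial x_{ij}$ stays bounded away from $0$ on $D$ and $F_\epsilon$ is $C^2$. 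Clearly $F_0=\mathrm{id}$, and since $(\epsilon,x)\mapsto DF_\epsilon(x)$ is continuous on a compact product, I can pick $\delta>0$ with $\|DF_\epsilon(x)-I\|_{\mathrm{op}}<\tfrac12$ uniformly for $\epsilon<\delta$ and $x\in\tilde D$. The inverse function theorem then makes $F_\epsilon$ a local diffeomorphism at every point, and the mean-value identity
\[
F_\epsilon(x)-F_\epsilon(y) \;=\; (x-y) \;+\; \int_0^1 \bigl[DF_\epsilon\bigl(y+t(x-y)\bigr)-I\bigr](x-y)\,dt,
\]
combined with the same bound, yields $\|F_\epsilon(x)-F_\epsilon(y)\|\ge\tfrac12\|x-y\|$, so $F_\epsilon$ is globally injective on $\tilde D$.

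For surjectivity, the crucial observation is face preservation: $x_{ij}=0$ forces $x_{ij}'=0$, while for $\epsilon<\delta$, $x_{ij}>0$ forces $x_{ij}'>0$; hence $F_\epsilon(\partial D)\subseteq\partial D$ and $F_\epsilon(\mathrm{int}\,D)\subseteq\mathrm{int}\,D$. I would then apply Brouwer degree in the $N(M-1)$-dimensional affine hull of $D$: for any $y\in\mathrm{int}\,D$, the straight-line homotopy $H(t,x)=F_{t\epsilon}(x)$ satisfies $H(t,\partial D)\subseteq\partial D$ for every $t\in[0,1]$, so $y\notin H(t,\partial D)$ throughout and homotopy invariance gives $\deg(F_\epsilon,\mathrm{int}\,D,y)=\deg(\mathrm{id},\mathrm{int}\,D,y)=1$. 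The preimage is thus nonempty, so $F_\epsilon(\mathrm{int}\,D)=\mathrm{int}\,D$, and the compact image $F_\epsilon(D)$ contains the dense subset $\mathrm{int}\,D$ and therefore equals $D$. To upgrade the bijective local diffeomorphism to a $C^2$ diffeomorphism in the sense of the paper's footnote, I would extend $F_\epsilon$ by the same formula to a $C^2$ map on an open neighborhood of $D$ inside the affine hull (the denominator remains positive there for small $\epsilon$), invoke the inverse function theorem on the extension, and restrict the resulting $C^2$ inverse to $D$.

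The main obstacle is the surjectivity step. Local invertibility and injectivity are essentially routine once closeness to the identity is arranged uniformly; what is delicate is excluding that $F_\epsilon(D)$ is a proper subset of $D$, because $D$ is a compact manifold with corners and $F_\epsilon(D)$ is a priori only a compact subset. Face preservation is exactly what makes the homotopy $F_{t\epsilon}$ admissible for Brouwer degree, and it is the one place where the multiplicative structure of MWU (every update is $x_{ij}$ times a positive factor, so it kills precisely the zero coordinates while keeping the positive ones positive) enters essentially rather than through the soft fact that $F_\epsilon=\mathrm{id}+O(\epsilon)$ in $C^1$.
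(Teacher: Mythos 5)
Your proposal is correct, but it takes a genuinely different route from the paper. The paper establishes that $T$ is a local diffeomorphism (by observing that $\det J_\epsilon\to 1$ uniformly on the compact $D$ as $\epsilon\to 0$) and then invokes a theorem of Ho (Theorem 2 in \cite{HCW}): a proper local homeomorphism from a path-connected space to a simply-connected Hausdorff space is a global homeomorphism; properness is free because $D$ is compact. You instead make the map quantitatively close to the identity in the $C^1$ sense ($\|DF_\epsilon-I\|_{\mathrm{op}}<\tfrac12$ rather than merely $\det J_\epsilon$ bounded away from $0$, which you need for the next step), derive global injectivity from the mean-value inequality and the convexity of the projected domain, and derive surjectivity from Brouwer degree together with the face-preservation property of the multiplicative update. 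What each approach buys: the paper's argument is short once one accepts the Ho theorem and uses only the weak non-degeneracy of $J_\epsilon$, but it leans on simple connectivity of $D$ and a somewhat specialized topological result. Your argument is more self-contained and elementary (mean value plus degree theory, both standard), needs only convexity of the domain rather than simple connectivity, gives the extra quantitative fact that $F_\epsilon$ is bi-Lipschitz with constants near $1$, and makes explicit where the multiplicative structure enters (face preservation, which makes the homotopy $F_{t\epsilon}$ admissible for degree); the cost is that you must verify the stronger operator-norm bound and set up the degree argument carefully in the affine hull. Both proofs are valid for the statement as given.
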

\begin{proof}
Firstly, we prove that the Baum-Eagon map of $Q$ is a local diffeomorphism.
For a fixed $i$, denote
\[
T(\vec{x})_{ij}=\frac{x_{ij}+\epsilon x_{ij}\frac{\partial P}{\partial x_{ij}}}{\sum_j x_{ij}+\epsilon\sum_j x_{ij}\frac{\partial P}{\partial x_{ij}}}.
\]

Since the roots of the characteristic polynomial of a matrix vary continuously as a function of coefficients (see Theorem VI.1.2 in \cite{RB}), let $J_{\epsilon}$ be the Jacobian of the function $T(\vec{x})$, i.e., of the update rule of the Baum-Eagon dynamics induced by function $Q=\sum_{ij} x_{ij}+\epsilon P$ (note that $T$ coincides with the MWU dynamics for function $P$ with same stepsize $\epsilon$ (i.e., same learning rates)). The determinant $|J_{\epsilon}|$ is continuous with respect to $\epsilon$. When $\epsilon\rightarrow 0$, it holds that $|J_{\epsilon}|\rightarrow 1$ at each point $p \in D$ where the Jacobian is computed, thus for each point $p\in D$ there exists $\epsilon_p$, such that for all $\epsilon<\epsilon_p$, we get that $|J_{\epsilon}(p)|>1/2$.

Since the determinant is also continuous with respect to points in $D$, for $\epsilon_p$, there is a neighborhood of $p$, denoted as $U(p,\epsilon_p)$, such that for all $\vec{x} \in U(p,\epsilon_p)$, $|J_{\epsilon_p}(\vec{x})|>1/2$. Thus we have obtained an open cover of $D$, which is $\bigcup_{p\in D}U(p,\epsilon_p)$. Since $D$ is compact, there is a finite subcover of $\bigcup_{p\in D}U(p,\epsilon_p)$, denoted as $\bigcup_{i=1}^nU(p_i,\epsilon_{p_i})$. Then the minimum of $\{\epsilon_{p_i}\}$ gives the $\delta$ in the lemma.

To prove that the Baum-Eagon map $T$ of $Q$ is a global diffeomorphism, one needs Theorem 2 in \cite{HCW}. Since $T$ is proper (preimage of compact set is compact) and $D$ is simply connected and path connected, we conclude that $T$ is a homeomorphism on $D$ (we suggest the reader to see the supplementary material for all the missing definitions).
\end{proof}
\vspace{-5pt}
\begin{remark} The above theorem essentially can be generalized for different stepsizes (learning rates) $\epsilon$ for each player. The idea is that we should apply the same techniques on the function $\sum_{i=1}^N \frac{1}{\epsilon_i} \sum_{j=1}^M x_{ij}+ P$.
\end{remark}

\section{Convergence Analysis of MWU for Arbitrary Functions}
In this section we provide the proof of Theorem~\ref{thm:main2}. As has already been proven in previous section (Theorem \ref{thm:diffeomorphism}), the update rule of the MWU dynamics is a diffeomorphism for appropriately small enough learning rates. Following the general framework of \cite{LPPSJR17}, we will also make use of the Center-stable manifold theorem (Theorem \ref{thm:manifold}). The challenging part technically in this paper is to prove that every stationary point $\mathbf{x}$ that is not a local maximum has the property that the Jacobian of the MWU dynamics computed at $\mathbf{x}$ has a repelling direction (eigenvector).

\subsection{Equations of the Jacobian at a fixed point and projection}\label{sec:eqJac}
We focus on multiplicative weights updates algorithm. Assume that $\mathbf{x}^t$ is the $t$-th iterate of MWU. Recall the equations:
\begin{equation}\label{eq:MWU}
x_{ij}^{t+1}=x_{ij}^t\frac{1+\epsilon_i\frac{\partial P}{\partial x_{ij}}|_{\vec{x} = \mathbf{x}^t}}{1+\epsilon_i\sum_s x_{is}^t\frac{\partial P}{\partial x_{is}}|_{\vec{x}=\mathbf{x}^t}}
\end{equation}
where $\epsilon_i$ the stepsize of the dynamics. Let $T : D \to D$ be the update rule of the MWU dynamics (\ref{eq:MWU}). Fix indexes $i,i'$ for players and $j,s$ for strategies. Set $S_i = 1+\epsilon_i\sum_{j'}x_{ij'}\frac{\partial P}{\partial x_{ij'}}$. The equations of the Jacobian look as follows:
\begin{align*}
\frac{\partial T_{ij}}{\partial x_{ij}}&=\frac{1+\epsilon_i\frac{\partial P}{\partial x_{ij}}}{S_i}
+\frac{x_{ij}}{S_i^2}\left(\epsilon_i\frac{\partial^2P}{\partial x_{ij}^2}\cdot S_i-\epsilon_i\left(1+\epsilon\frac{\partial P}{\partial x_{ij}}\right)
\cdot\left(\frac{\partial P}{\partial x_{ij}}+x_{ij}\frac{\partial^2P}{\partial x_{ij}^2}+\sum_{s\ne j}x_{is}\frac{\partial P}{\partial x_{is}\partial x_{ij}}\right)\right)
\end{align*}
for all $i\in[N],$ $j\in [M],$
\begin{align*}
\frac{\partial T_{ij}}{\partial x_{is}}&=\frac{x_{is}}{S_i^2}\left(\epsilon_i\frac{\partial^2P}{\partial x_{ij}\partial x_{is}}\cdot S_i
-\epsilon_i\left(1+\epsilon_i\frac{\partial P}{\partial x_{ij}}
\cdot\left(\frac{\partial P}{\partial x_{is}}+x_{is}\frac{\partial^2 P}{\partial x_{is}^2}+\sum_{j'\ne s}x_{ij'}\frac{\partial^2P}{\partial x_{ij'}\partial x_{is}}\right)\right)\right)
\end{align*}
for all $i\in[N],$ $j,s\in [M],$ $j\ne s$ and
\begin{align*}
\frac{\partial T_{ij}}{\partial x_{i's}}&=\frac{x_{ij}}{S_i^2}\left(\epsilon_i\frac{\partial^2P}{\partial x_{i's}\partial x_{ij}}\cdot S_i
-\epsilon_i\left(1+\epsilon_i\frac{\partial P}{\partial x_{ij}}\right)\cdot\left(\sum_{j'}x_{ij'}\frac{\partial^2P}{\partial x_{ij'}\partial x_{i's}}\right)\right)
\end{align*}
for all $i,i'\in [N],$ $j,s\in [M],$ with $i\ne i'$.

Let $\mathbf{y}$ to be a fixed point of MWU dynamics. We define the projected MWU mapping to be the function $T_{\mathbf{y}}$ by removing \textit{one} variable $j \in [M]$ for each player $i \in [N]$ (i.e, $x_{ij}$) such that $y_{ij}>0$. We also define $D_{\mathbf{y}}$ to be the projection of $D$ in the same way. Now the mapping is $T_{\mathbf{y}} : \mathcal{S} \to \mathcal{S}$ for $\mathcal{S} \subset \mathbb{R}^{NM-M}$ is still a diffeomorphism where $\mathcal{S}$ is an open set that contains $D_{\mathbf{y}}$. We define the corresponding Jacobian (called \textit{projected Jacobian}) to be the submatrix by removing rows and columns that correspond to variables $x_{ij}$ that were removed.

\subsection{Stability and proof of Theorem \ref{thm:main2}}

We prove the following important lemma that characterizes (partially) the unstable fixed points (meaning the spectral radius of the Jacobian computed at the fixed point is greater than one) of the MWU dynamics and relates them to the stationary points.

\begin{lemma}[Non first order stationary points are unstable]\label{lem:unstable} Let $\mathbf{y}$ be a fixed point of MWU dynamics that violates the first order KKT conditions (is not a first order stationary point). It holds that the projected Jacobian computed at $\mathbf{y}$ (formally now is the projected point $\mathbf{y} \in D_{\mathbf{y}}$) has an eigenvalue with absolute value greater than one.
\end{lemma}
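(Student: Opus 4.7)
The plan is to exploit the sparse structure of the row of the Jacobian indexed by the coordinate that witnesses the KKT violation. The first step is to extract a good index pair from the failure of stationarity. Rewriting the fixed point equation $y_{ij}=y_{ij}(1+\epsilon_i \frac{\partial P}{\partial x_{ij}}(\mathbf y))/(1+\epsilon_i \lambda_i(\mathbf y))$, with $\lambda_i(\mathbf y):=\sum_s y_{is}\frac{\partial P}{\partial x_{is}}(\mathbf y)$, yields the dichotomy that for every $(i,j)$ either $y_{ij}=0$ or $\frac{\partial P}{\partial x_{ij}}(\mathbf y) = \lambda_i(\mathbf y)$. Since $\mathbf y$ fails the first order KKT conditions, there must exist a coordinate $(i,j^*)$ with $y_{ij^*}=0$ and $\frac{\partial P}{\partial x_{ij^*}}(\mathbf y) > \lambda_i(\mathbf y)$.

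Next, I would analyze the $(i,j^*)$-th row of the unprojected Jacobian $J = \mathrm{D}T(\mathbf y)$. Writing $T_{ij^*}(\mathbf x) = x_{ij^*} \cdot N_{ij^*}(\mathbf x)/S_i(\mathbf x)$, with $N_{ij^*}:=1+\epsilon_i\frac{\partial P}{\partial x_{ij^*}}$, exposes an overall factor of $x_{ij^*}$ in the map. Consequently, for every $(i',s)\neq (i,j^*)$, the partial derivative $\partial T_{ij^*}/\partial x_{i's}$ retains that factor (since $x_{ij^*}$ is constant in $x_{i's}$) and thus evaluates to $0$ at $\mathbf y$, while the product rule gives the diagonal entry
\[
\left.\frac{\partial T_{ij^*}}{\partial x_{ij^*}}\right|_{\mathbf y} = \frac{N_{ij^*}(\mathbf y)}{S_i(\mathbf y)} = \frac{1+\epsilon_i\frac{\partial P}{\partial x_{ij^*}}(\mathbf y)}{1+\epsilon_i\lambda_i(\mathbf y)}.
\]
So the $(i,j^*)$-th row of $J$ is supported on its diagonal alone.

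The third step is to verify that this structure survives the projection and then read off the eigenvalue. For each player $i'$, the projection drops a coordinate $x_{i',j_0^{i'}}$ with $y_{i',j_0^{i'}}>0$; in particular $j_0^i\neq j^*$, so the row indexed by $(i,j^*)$ remains. The chain rule via $x_{i',j_0^{i'}} = 1-\sum_{s\neq j_0^{i'}} x_{i's}$ yields $\tilde J_{(i,j^*),(i',s)} = J_{(i,j^*),(i',s)} - J_{(i,j^*),(i',j_0^{i'})}$, and for $(i',s)\neq (i,j^*)$ both summands vanish by the same $y_{ij^*}=0$ factor. Hence the $(i,j^*)$-th row of $\tilde J$ has a single nonzero entry on the diagonal, and Laplace expansion along that row identifies this entry as an eigenvalue of $\tilde J$. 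For $\epsilon_i$ small enough that numerator and denominator are both positive (possible uniformly over $\mathbf y\in D$ by compactness and $C^2$-regularity of $P$), the strict inequality $\frac{\partial P}{\partial x_{ij^*}}(\mathbf y) > \lambda_i(\mathbf y)$ forces this eigenvalue strictly above $1$ in absolute value. The only point demanding care, and the main obstacle worth flagging, is confirming that the projection correction $J_{(i,j^*),(i',j_0^{i'})}$ is itself zero; this is immediate from the same $y_{ij^*}=0$ factor, so the sparse row structure passes cleanly to $\tilde J$ and the argument closes.
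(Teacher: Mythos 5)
Your proof is correct and follows essentially the same route as the paper's: identify a coordinate $(i,j^*)$ with $y_{ij^*}=0$ witnessing the KKT violation, observe that the factor of $x_{ij^*}$ in $T_{ij^*}$ makes the $(i,j^*)$-th row of the (projected) Jacobian vanish off the diagonal, and read off the diagonal entry $\frac{1+\epsilon_i\partial P/\partial x_{ij^*}}{1+\epsilon_i\lambda_i}>1$ as an eigenvalue. You supply more care than the paper does in verifying that the sparse row structure is preserved under projection (via the chain-rule correction $J_{(i,j^*),(i',s)}-J_{(i,j^*),(i',j_0^{i'})}$ and the observation $j_0^i\neq j^*$), and in noting that $\epsilon_i$ must be small enough to keep the denominator positive; both points the paper leaves implicit.
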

\begin{proof} Since $\mathbf{y}$ is not a stationary point, there exist $i,j$ and so that $y_{ij}=0$ but $\frac{\partial P}{\partial x_{ij}} \big |_{\mathbf{x} = \mathbf{y}} > \sum_{j'} y_{ij'} \frac{\partial P}{\partial x_{ij'}} \big |_{\mathbf{x} = \mathbf{y}}$. The projected Jacobian computed at $\mathbf{y}$ has the property that for variable $x_{ij}$, the corresponding row
has entries zeros, apart from the corresponding diagonal entry that is $\frac{1+\epsilon_i \frac{\partial P}{\partial x_{ij}}}{1+\epsilon_i\sum_{j'}x_{ij'}\frac{\partial P}{\partial x_{ij'}}}>1$ (from the definition of stationary point). Since the projected Jacobian has as eigenvalue $\frac{1+\epsilon_i \frac{\partial P}{\partial x_{ij}}}{1+\epsilon_i\sum_{j'}x_{ij'}\frac{\partial P}{\partial x_{ij'}}}$ the claim follows.
\end{proof}

The following technical lemma gives a full characterization among the unstable fixed points of MWU dynamics and the second order stationary points (local maxima). This lemma is more challenging than the stability analysis in \cite{LPPSJR17} due to the fact that we have constraints on simplex.
\begin{lemma}[Non second order stationary points are unstable]\label{lem:unstable2} Let $\mathbf{x}^*$ be a fixed point of MWU dynamics that is a stationary point (satisfies first order KKT conditions) and violates the second order KKT conditions (is not a second order stationary point). It holds that the projected Jacobian computed at $\mathbf{x}^*$ (formally now is the projected point $\mathbf{x}^* \in D_{\mathbf{x}^*}$) has an eigenvalue with absolute value greater than one.
\end{lemma}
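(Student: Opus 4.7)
Using the explicit Jacobian formulas from Section~\ref{sec:eqJac} evaluated at $\mathbf{x}^*$, and setting $\lambda_i:=\sum_{j'}x^*_{ij'}\frac{\partial P}{\partial x_{ij'}}(\mathbf{x}^*)$, $S_i:=1+\epsilon_i\lambda_i$, first-order KKT lets me replace $\partial P/\partial x_{ij}$ by $\lambda_i$ whenever $x^*_{ij}>0$, which simplifies the Jacobian $J:=DT(\mathbf{x}^*)$ to the uniform expression
\[
\frac{\partial T_{ij}}{\partial x_{i's}}\Big|_{\mathbf{x}^*}=\delta_{ii'}\delta_{js}+\frac{\epsilon_i x^*_{ij}}{S_i}\Big[H_{(ij),(i's)}-\sum_{j'}x^*_{ij'}H_{(ij'),(i's)}-\lambda_i\delta_{ii'}\Big],
\]
with $H=\nabla^2 P(\mathbf{x}^*)$. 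Two structural facts follow: (a) rows $(i,j)\in I^0:=\{(i,j):x^*_{ij}=0\}$ are zero off the diagonal and have diagonal entry $(1+\epsilon_i\partial P/\partial x_{ij})/S_i\le 1$ by first-order KKT, so the $I^0$-block is harmless; (b) for any tangent vector $\mathbf{v}$ (i.e.\ $\sum_s v_{is}=0$ for all $i$) the $-\lambda_i\delta_{ii'}$ term drops out on summation against $\mathbf{v}$, giving the compact form
\[
\big((J-I)\mathbf{v}\big)_{ij}=\frac{\epsilon_i x^*_{ij}}{S_i}\Big[(H\mathbf{v})_{ij}-\sum_{j'}x^*_{ij'}(H\mathbf{v})_{ij'}\Big].
\]

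Next I would restrict attention to the subspace on which the second-order KKT condition is tested, $V^+:=\{\mathbf{v}:v_{ij}=0\text{ on }I^0,\ \sum_j v_{ij}=0\ \forall i\}$. Combining fact (a) with $\sum_j T_{ij}\equiv 1$ shows $V^+$ is invariant under $J$, so its spectrum appears among that of the projected Jacobian. The key algebraic identity $D_i\Pi_i=D_i-\mathbf{x}_i^*(\mathbf{x}_i^*)^{\top}$ (where $\Pi_i$ subtracts the player-$i$ weighted mean) rewrites the previous display as $(J-I)\mathbf{v}=\tilde AH\mathbf{v}$ on $V^+$ with
\[
\tilde A=\mathrm{blockdiag}_i\!\left(\tfrac{\epsilon_i}{S_i}\bigl(D_i-\mathbf{x}_i^*(\mathbf{x}_i^*)^{\top}\bigr)\right),\qquad D_i=\mathrm{diag}(x^*_{ij})_{j\in I_i^+}.
\]
The matrix $\tilde A$ is manifestly \emph{symmetric}, and a one-line Cauchy--Schwarz ($(\sum x^*_{ij}v_j)^2\le\sum x^*_{ij}v_j^2$ with equality only for constant $\mathbf{v}$, excluded by $\sum_j v_{ij}=0$) shows it is positive definite on $V^+$. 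The asymmetry of $J$ is thereby absorbed into the product of the symmetric positive-definite factor $\tilde A$ with the symmetric Hessian $H$ --- exactly the ``structured asymmetric'' matrix the introduction announces.

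The concluding and technically most delicate step is to read off the inertia of $\tilde AH|_{V^+}$ from the quadratic form $\mathbf{v}\mapsto \mathbf{v}^{\top}H\mathbf{v}$. I would endow $V^+$ with the positive-definite inner product $\langle\mathbf{u},\mathbf{v}\rangle_\star:=\mathbf{u}^{\top}(\tilde A|_{V^+})^{-1}\mathbf{v}$, and then using $\tilde A^{\top}=\tilde A$ together with $\tilde A\,(\tilde A|_{V^+})^{-1}=I$ on $V^+$ obtain
\[
\langle \tilde AH\mathbf{u},\mathbf{v}\rangle_\star=\mathbf{u}^{\top}H\mathbf{v}=\langle\mathbf{u},\tilde AH\mathbf{v}\rangle_\star,
\]
so $\tilde AH|_{V^+}$ is self-adjoint with respect to $\langle\cdot,\cdot\rangle_\star$ and therefore has real eigenvalues $\{\mu_k\}$ with a $\langle\cdot,\cdot\rangle_\star$-orthogonal eigenbasis $\{\mathbf{v}_k\}$. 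Expanding $\mathbf{v}=\sum_k c_k\mathbf{v}_k$ yields $\mathbf{v}^{\top}H\mathbf{v}=\sum_k c_k^2\mu_k\|\mathbf{v}_k\|_\star^2$, which identifies the signs of the $\mu_k$'s with the signs taken by $\mathbf{v}\mapsto\mathbf{v}^{\top}H\mathbf{v}$ on $V^+$; this is precisely the Sylvester's-law-of-inertia statement flagged in the introduction. By hypothesis there exists $\mathbf{y}\in V^+$ with $\mathbf{y}^{\top}H\mathbf{y}>0$, so some $\mu_k>0$, whence $1+\mu_k>1$ is an eigenvalue of the projected Jacobian of absolute value strictly greater than one. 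The main obstacle throughout is the non-symmetry of $J$; it is handled by the decomposition $D_i\Pi_i=D_i-\mathbf{x}_i^*(\mathbf{x}_i^*)^{\top}$ exposing the symmetric factor, together with the $\tilde A^{-1}$-inner-product trick that converts Sylvester's inertia theorem into a spectral statement about the non-symmetric operator $\tilde AH$.
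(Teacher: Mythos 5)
Your proof is correct, and at the top level it follows the same strategy as the paper: write the Jacobian at $\mathbf{x}^*$ (restricted to the relevant tangent directions) as $I$ plus a product of a symmetric positive-(semi)definite block factor with $\nabla^2 P(\mathbf{x}^*)$, then invoke Sylvester's law of inertia to convert a positive direction of the Hessian on the tangent cone into a real eigenvalue $>1$ of the projected Jacobian. Concretely, your $\tilde A=\mathrm{blockdiag}_i\,\tfrac{\epsilon_i}{S_i}\bigl(D_i-\mathbf{x}_i^*(\mathbf{x}_i^*)^{\top}\bigr)$ is precisely the paper's $D_{xs}(I-D_{xx})$, since $D_{xs}D_{xx}$ in block $i$ equals $\tfrac{\epsilon_i}{S_i}\mathbf{x}_i^*(\mathbf{x}_i^*)^{\top}$. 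Where you diverge — and improve on — the paper is at the step that turns asymmetry into a real spectral statement. The paper conjugates only by $D_{xs}^{1/2}$, forming $\tilde Q=D_{xs}^{1/2}(I-D_{xx})\nabla^2 P\,D_{xs}^{1/2}$, and then asserts that $\tilde Q$ is symmetric; it is not, because $(I-D_{xx})\nabla^2 P$ is not symmetric and $D_{xx}$ itself is not symmetric, so sandwiching between $D_{xs}^{1/2}$'s does not fix this. Your route — restricting to the invariant subspace $V^+$ where $\tilde A$ is genuinely SPD, switching to the $\tilde A^{-1}$-weighted inner product (equivalently, conjugating by $\tilde A^{1/2}|_{V^+}$), observing that $\tilde A H|_{V^+}$ becomes self-adjoint, and then reading off the inertia — is the standard, correct way to extract a real positive eigenvalue from the product of an SPD matrix with a symmetric one, and it is exactly what the flagged use of Sylvester's law requires. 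You also treat the boundary case explicitly via the zero rows on $I^0$ and the fact that the quadratic form in the second-order KKT condition is tested on $V^+$, where the paper instead waves at Lemma~\ref{lem:unstable} and assumes interiority. In short: same decomposition and same inertia idea, but your version is the rigorous one.
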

\begin{proof}
Because of Lemma \ref{lem:unstable} we may assume that $\mathbf{x}^*$ in the interior of $D$ (all coordinates are positive). Set $S_i = 1+\epsilon_i\sum_{j'=1}^M x^*_{ij'}\frac{\partial P}{\partial x_{ij'}}\big |_{\mathbf{x} = \mathbf{x}^*}$ for $i \in [N]$.
Set
\[
D_{xs}=
\]
\[
\left(
\begin{array}{ccccccc}
\frac{\epsilon_1 x^*_{11}}{S_1} & &\text{\Large 0}  & & & &\\
 & \ddots & & & &\text{\Huge 0}& \\
 \text{\Large 0}  & &\frac{\epsilon_1 x^*_{1M}}{S_1} & & & &\\
 & & &\ddots& & &\\
 & & & &\frac{\epsilon_N x^*_{N1}}{S_N}& &\text{\Large 0}\\
 &\text{\Huge 0} & & & &\ddots &\\
 & & & &\text{\Large 0} & &\frac{\epsilon_N x^*_{NM}}{S_N}
\end{array}
\right)
\]
and
\[
D_{xx}=
\left(
\begin{array}{ccccccc}
x^*_{11} & \cdots & x^*_{1M} & & & &\\
\vdots & & \vdots & & &\text{\Huge 0}& \\
x^*_{11}& \cdots & x^*_{1M} & & & &\\
&&&\ddots&&&\\
& & & &x^*_{N1}&\cdots & x^*_{NM}\\
& \text{\Huge 0} & & &\vdots& &\vdots\\
& & & &x^*_{N1}& \cdots & x^*_{NM}
\end{array}
\right)
\]
where $D_{xs}$ is a diagonal matrix with positive diagonal entries and $D_{xx}$ has rank $N$. The Jacobian (not projected) of MWU dynamics computed at $\mathbf{x}^*$ can be expressed in a compact form (see Section \ref{sec:eqJac} for the equations of the Jacobian) as
\begin{equation}
I + D_{xs}(I - D_{xx}) \nabla^2 P(\mathbf{x}^*) = I + D_{xs}\nabla^2 P(\mathbf{x}^*) - D_{xs} D_{xx} \nabla^2 P(\mathbf{x}^*),
\end{equation} where $I$ denotes the identity matrix (in particular of size $NM$ in the aforementioned expression).
 Observe that if $\mathbf{x}^*$ violates the second order KKT conditions it means that for the symmetric matrix $\nabla^2 P(\mathbf{x}^*)$ there is a vector $\mathbf{z}$ orthogonal to all ones vector (for each player) such that $\mathbf{z}^{\top}\nabla^2 P(\mathbf{x}^*) \mathbf{z}$ is positive. 
 Moreover, it holds that the Jacobian has the same eigenvalues with the matrix $I + D_{xs}^{1/2}\nabla^2 P(\mathbf{x}^*)D_{xs}^{1/2} - D_{xs}^{1/2} D_{xx} \nabla^2 P(\mathbf{x}^*)D_{xs}^{1/2}$ (since $D_{xs}$ is positive diagonal). Let us define $\tilde{Q} := D_{xs}^{1/2}\nabla^2 P(\mathbf{x}^*)D_{xs}^{1/2} - D_{xs}^{1/2} D_{xx} \nabla^2 P(\mathbf{x}^*)D_{xs}^{1/2}$. 
 To finish the proof, observe that for the vector $\mathbf{z}' := D_{xs}^{-1/2}\mathbf{z}$ it holds that $\mathbf{z}'^{\top} \tilde{Q} \mathbf{z}' = \mathbf{z}^{\top} \nabla^2 P(\mathbf{x}^*) \mathbf{z}>0$ (which is positive). Therefore $\tilde{Q}$ is a symmetric matrix with at least one positive eigenvalue, hence the Jacobian has an eigenvalue which is greater than one. It is easy to see that this is also an eigenvalue of the projected Jacobian and the claim follows.
\end{proof}

We can now prove our second main Theorem \ref{thm:main2}.

\begin{proof}[Proof of Theorem \ref{thm:main2}] As long as we establish the idea of projecting the Jacobian, then the proof follows the lines of work of \cite{MPP15}, \cite{LPPSJR17} and is rather generic. We shall show that the set of initial conditions so that MWU dynamics converges to unstable fixed points (meaning that the spectral radius of the Jacobian computed at the fixed point is greater than one) is of measure zero and then by Lemma \ref{lem:unstable}, the proof follows. Let $\mathbf{y}$ be an unstable fixed point of the MWU (as a dynamical system) with update rule a function $T_{\mathbf{y}}: \mathcal{S} \to \mathcal{S}$. For such unstable fixed point $\mathbf{y}$, there is an associated open neighborhood $B_{\mathbf{y}} \subset \mathcal{S}$ promised by the Stable Manifold Theorem \ref{thm:manifold}.

Define $W_{\mathbf{y}}=\{\mathbf{x}^0 \in D_{\mathbf{y}}: \lim_{t \to \infty}  \mathbf{x}^t = \mathbf{y}\}$. Fix a point $\mathbf{x}^0 \in W_{\mathbf{y}}$. Since $\mathbf{x}^k \to \mathbf{y} $, then for some non-negative integer $K$ and all $t\ge K$,  $T_{\mathbf{y}} ^t (\mathbf{x}^0) \in B_{\mathbf{y}}$ ($T_{\mathbf{y}}^t$ denotes composition of $T_{\mathbf{y}}$ $t$ times). We mentioned above that $T_{\mathbf{y}}$ is a diffeomorphism in $\mathcal{S}$. By Theorem \ref{thm:manifold}, $Q_{\mathbf{y}} := \cap_{k=0}^\infty T^{-k}_{\mathbf{y}} ( B_{\mathbf{y}})$  is a subset of the local center-stable manifold  which has co-dimension at least one, and $Q_{\mathbf{y}}$ is thus measure zero.

Finally, $T_{\mathbf{y}}^{K}(\mathbf{x}^0) \in Q_{\mathbf{y}}$ implies that $\mathbf{x}^0 \in T_{\mathbf{y}}^{-K} ( Q_{\mathbf{y}})$. Since $K$ is unknown we union over all non-negative integers, to obtain $\mathbf{x}^0 \in \cup_{j=0}^\infty T_{\mathbf{y}}^{-j} (Q_{\mathbf{y}})$. Since $\mathbf{x}^0$ was arbitrary, we have shown that $W_{\mathbf{y}} \subset  \cup_{j=0}^\infty T_{\mathbf{y}}^{-j} (Q_{\mathbf{y}})$. Using Lemma 1 of page 5 in \cite{LPPSJR17} and that countable union of measure zero sets is measure zero, $W_{\mathbf{y}}$ has measure zero. The claim follows since by mapping $W_{\mathbf{y}}$ to the set $W$ (which is defined by padding the removed variables), then $W$ is the set of initial conditions that MWU dynamics converges to $\mathbf{y}$ and is of measure zero in $D$.
\end{proof}

\subsection{On the speed of convergence}

In this section we argue about the limitations of any algorithm that aims at solving maximization problem subject to simplex constraints (even for polynomial objectives), i.e., problem (\ref{eq:maxopt}). We conclude that it is unlikely that MWU dynamics (or any other algorithm) converges in polynomial time to a local maximum (for problem (\ref{eq:maxopt})). In fact, as we will show  providing a polynomial time algorithm for finding even first order stationary points for an arbitrary polynomial function is at least as hard as computing Nash equilibria for general congestion games, a problem for which no polynomial time algorithm is known and whose time complexity lies in CLS \cite{Daskalakis:2011:CLS:2133036.2133098}. Computing second order stationary points even for general bilinear functions, specifically even for function of the form $f(\mathbf{x})= \sum_{i, i', i\neq i'} \sum_{j, j'} a_{ii'jj'} x_{ij} x_{i'j'} + \sum_i \sum_j b_{ij}x_{ij}$ is strongly connected with the problem finding pure Nash equilibria even in linear congestion games that is known to be PLS-complete  \cite{Fabrikant:2004:CPN:1007352.1007445,Ackermann:2008:ICS:1455248.1455249}.

Specifically, it suffices to focus on a special class of congestion games which are called threshold games. These are congestion games in which the set of resources $R$ is divided into two disjoint subsets $R_{in}$ and $R_{out}$. The set ${R}_{out}$ contains a resource $r_i$ for
every player $i \in N$ . This resource has a fixed delay $T_i$ called the threshold of player $i$. Each
player $i$ has exactly two strategies:  a strategy $S_i^{out}=\{r_i\}$
with $r_i \in R_i^{out}$, and a strategy $S_i^{in}\subseteq R_{in}$.
 Agent $i$ prefers strategy $S_i^{in}$ to  strategy $S_i^{out}$ if the total cost of playing $S_i^{in}$ is smaller than the threshold cost $T_i$. Quadratic threshold games are a subclass of threshold games in which the
set $R_{in}$ contains exactly one resource $r_{ii'}$ for every unordered pair of players $\{i, i'\} \subset N$. For every player $i \in N$ of a quadratic threshold game, his strategy set  $S_{in} = \{r_{ii'} | i'\in N, j i'\neq i\}$. Without loss of generality let any resource $r_{ii'}$ have a linear delay function of the form $c_{ii'}(k) = a_{ii'}k$ with $a_{ii'} > 0$. Furthermore, all thresholds can be assumed to be positive.
 \cite{Ackermann:2008:ICS:1455248.1455249} proves that computing a Nash equilibrium of a quadratic threshold game with nondecreasing delay functions is PLS-complete.

\begin{theorem}
 Finding a first-order stationary point for a general polynomial function $f$ is at least as hard as computing a Nash equilibrium for general congestion games.
Let $f(\mathbf{x})= \sum_{i, i', i\neq i'} \sum_{j, j'} a_{ii'jj'} x_{ij} x_{i'j'} + \sum_i \sum_j b_{ij}x_{ij}$, where for all $i$, $\sum x_{ij}=1$. Finding a
second-order stationary point of $f(x)$ is at least as hard as computing a pure Nash equilibrium in a generic quadratic threshold game.
\end{theorem}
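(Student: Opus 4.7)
The plan is to prove the two hardness reductions separately. For the first claim, I would build the reduction from a general congestion game to first-order stationary point finding via the multilinear extension of the Rosenthal potential. Given a congestion game on a product-of-simplices mixed strategy space $D$, let $P(\mathbf{x})$ be the Rosenthal potential $\Phi$ extended multilinearly to mixed strategies. The key property is that $\partial P / \partial x_{ij}$ equals the expected cost to player $i$ of playing pure strategy $j$ against the others' mixed strategies, up to an additive player-dependent constant. Consequently, the first-order KKT conditions in (\ref{eq:KKT}) for $\pm P$ on $D$ translate term-by-term into the defining conditions of a mixed Nash equilibrium: all support strategies of a player achieve equal expected cost, and no unused strategy strictly improves it. Thus any polynomial-time algorithm producing first-order stationary points of a general polynomial $P$ would yield a polynomial-time algorithm for mixed Nash in congestion games, a problem that lies in CLS and is not known to be in P.

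For the second claim, I would specialize the construction to a quadratic threshold game. Because each $r_{ii'} \in R_{in}$ has a linear delay and is used by at most two players, and because the out-resources contribute only linear terms $T_i(1-x_{i,\text{in}})$, the multilinear extension of the Rosenthal potential has \emph{exactly} the bilinear-plus-linear form $f(\mathbf{x}) = \sum_{i \neq i'} \sum_{j,j'} a_{ii'jj'} x_{ij} x_{i'j'} + \sum_i \sum_j b_{ij} x_{ij}$ from the theorem statement, with each player's simplex of size $M=2$. I would then show both directions of the reduction. (i) Every pure Nash equilibrium of the threshold game is a vertex of the product of $2$-simplices satisfying the first-order KKT condition (no beneficial single-player deviation, translated via the potential property), and at such a vertex the only $\mathbf{y}$ with $\sum_j y_{ij}=0$ and $y_{ij}=0$ whenever $x^*_{ij}=0$ is $\mathbf{y}=0$, so the second-order condition (\ref{eq:KKTsecondorder}) is automatic. (ii) Conversely, any second-order stationary point of $f$ can be efficiently converted to a pure Nash equilibrium.

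The hardest part is direction (ii). Since $f$ is multilinear (the diagonal Hessian blocks $\partial^2 f / \partial x_{ij} \partial x_{ij'}$ vanish), a mixed first-order stationary point must have each player indifferent across their support, i.e., it is a mixed Nash equilibrium of the game. I would then analyze the quadratic form $\mathbf{y}^\top \nabla^2 f(\mathbf{x}^*) \mathbf{y}$ restricted to indifference directions: perturbing player $i$ along $y_{ij_1}=\delta,\ y_{ij_2}=-\delta$ contributes nothing by itself, but a simultaneous perturbation of players $i$ and $i'$ gives a cross term $2\delta\delta'\bigl(a_{ii'j_1j'_1}-a_{ii'j_1j'_2}-a_{ii'j_2j'_1}+a_{ii'j_2j'_2} + (\text{symmetric in } i\leftrightarrow i')\bigr)$. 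Since $\delta,\delta'$ can take both signs, the second-order condition forces these quantities to vanish on every indifferent pair of players. For generic quadratic threshold game coefficients, these vanishing conditions cannot be satisfied in the relative interior of any face of positive dimension; so the only second-order stationary points are vertices, which by part (i) are precisely the pure Nash equilibria. In non-generic cases where indifference persists, the vanishing condition means the indifferent players decouple in $f$, and I would round them independently to best responses using their marginal gradients — each such rounding strictly improves $f$ or leaves it equal, so the process terminates at a vertex that remains first-order stationary, i.e., a pure Nash equilibrium.

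The main obstacle is making the genericity/rounding argument in direction (ii) fully rigorous: showing that for \emph{every} second-order stationary $\mathbf{x}^*$, one can extract a pure Nash in polynomial time without losing the potential-improvement guarantee. This requires a careful case analysis of how the vanishing Hessian cross-terms interact with the threshold structure of $R_{in}$ (pairwise shared resources) and with the first-order slackness for players on the out-strategy. The multilinearity of $f$ and the fact that each in-resource couples exactly two players should give enough rigidity to make each rounding step well-defined, but the bookkeeping across simultaneous indifferences is where the argument is most delicate.
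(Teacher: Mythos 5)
Your proposal follows essentially the same route as the paper. Both parts rest on the multilinear extension of Rosenthal's potential: its partial derivative $\partial P/\partial x_{ij}$ equals agent $i$'s expected payoff from pure strategy $j$ (up to a player-specific additive constant), so the first-order KKT conditions on $D$ are exactly the mixed Nash conditions; and for quadratic threshold games the potential is bilinear-plus-linear because each in-resource couples at most two players. For the second-order claim, the paper and you reach the same conclusion by slightly different phrasings of the same observation: you note that the diagonal Hessian blocks vanish by multilinearity, so a nonzero off-diagonal cross term $e$ between two indifferent players gives the $2\times 2$ block $\left(\begin{smallmatrix}0 & e\\ e & 0\end{smallmatrix}\right)$ with eigenvalues $\pm e$, forcing $e=0$ for second-order stationarity; the paper instead observes that under second-order stationarity the potential is constant along the one-parameter family $(\zeta_i, x_{-i})$, producing a continuum of MWU fixed points that contradicts the genericity (isolated-fixed-points) assumption. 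These are two faces of the same coin, and both correctly land on ``generic $\Rightarrow$ the only second-order stationary points are vertices, hence pure NE.''

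One caution on the extra mileage you propose: the ``rounding'' fallback for non-generic games is not needed (the theorem is stated for generic quadratic threshold games) and, as sketched, it would not give a polynomial-time extraction. Once you round one indifferent agent to a pure strategy, other agents' marginal gradients can flip, and iteratively fixing best responses while only guaranteeing the potential is nondecreasing is exactly PLS-style local search --- the very problem whose hardness you are trying to leverage. So if you keep the genericity hypothesis (as the paper does, invoking finitely many isolated MWU fixed points), drop the rounding discussion; it muddies rather than strengthens the reduction.
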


\begin{proof}
Firstly, any first order stationary point of the expected value of the potential is a Nash equilibrium, since the gradient of the potential corresponds to the vector of deviating payoffs for all agents and all strategies. Thus. first order stationarity implies that only strategies that give maximal payoff are played with positive probability, i.e. the strategy is a Nash equilibrium.
The expected value of the potential function of a quadratic threshold congestion games is a bilinear function. This is trivially true since each resource can only be used by at most two agents.  Specifically the expected value of the potential function of the game when each agent $i$ is using mixed strategy $(x_{i S_i^{in}},x_{i S_i^{out}})$ is equal to $\sum_{i\in N} x_{i S_i^{in}}T_i + \sum_{i\in N} x_{i S_i^{out}}\sum_{i'\neq i} a_{ii'} +\sum_{i,i',i\neq i'}  x_{i S_i^{out}}x_{i' S_{i'}^{out}} a_{ii'}$. By the genericity assumption we can assume that the number of fixed points of MWU are finite and isolated, e.g. \cite{KPT}.
  If this Nash equilibrium is pure then we are done. Suppose not, in which case there exist some agents that play mixed strategies with support equal to $2$.
Since the potential is a bilinear function it can be computed without error using its gradient and Hessian via Taylor expansion. Second order stationarity now implies that for any coordinated set of deviations of two  of the randomizing agents the potential can still not improve.
Consider the continuum of strategy profiles $(\zeta_i, x_{-i})$ where $i$ was a randomizing agent  that now deviates and plays strategy $S_i^{in}$ with arbitrary probability $\zeta_i \in [0,1]$. Since the original strategy profile $x$ is a NE, agent $i$ is still indifferent between his two actions. As we have argued any profile that exactly two randomizing agents deviate does not affect the value of the expected potential for so the value of the potential does not change. So, even if agent $i'$ was to deviate to strategy  $\zeta_i' \in [0,1]$, the value of the potential at  $(\zeta_i,\zeta_i' ,  x_{-i,i'})$ cannot be higher that its value at $(\zeta_i, x_{-i})$ and $x$. So, none of the randomizing agents at any strategy profile $(\zeta_i, x_{-i})$ can profit by deviating. Each point on the line segment $(\zeta_i, x_{-i})$  with $\zeta_i \in [0,1]$ is a stationary point of MWU, and we reach a contradiction to our genericity assumption. Thus, the second order stationary point of the potential is a pure Nash.
\end{proof}

\section{Applications}
\label{seq:applications}
\begin{figure}[h]
\centering
\includegraphics[width=0.38\textwidth]{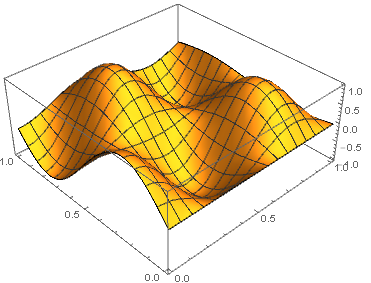}
\caption{Landscape of non-concave function $cos(8x)sin(6y)$.}
\label{fig:nonconvex}
\end{figure}

One application of Baum-Eagon algorithm is parameter estimation via maximum likelihood. Suppose that $X_1,...,X_n$ are samples from a population with probability density function $f(x|\theta_1,...,\theta_k)$, the likelihood function is defined by
\[
L(\theta|\mathbf{x})=L(\theta_1,...,\theta_k|x_1,...,x_n)=\prod^n_{i=1}f(x_i|\theta_1,...,\theta_k).
\]
Maximum likelihood estimator has many applications in machine learning and statistics (e.g., regression) and when is consistent, the problem of estimation boils down to maximizing the likelihood function. This can be achieved via the E-M algorithm based on the Baum-Eagon inequality. For example, the estimation of the parameters of hidden Markov models (motivated by real world problems, see \cite{GKNN} for an example on speech recognition) result in the maximization of rational functions over a domain of probability values. The rational functions are conditional likelihood functions of parameters $\theta=(\theta_1,...,\theta_k)$. The Baum-Eagon dynamics is used to estimate the parameters of hidden Markov models. Our main result indicates that MWU dynamics should be used for the optimization part as MWU has some nice properties (well-defined, update rule is a diffeomorphism, avoids non-stationary points) in which Baum-Eagon dynamics might not have.

Below we provide a pictorial illustration of MWU dynamics applied to a non-concave function (not rational). The function we consider is $P(x,y) = cos(8x)sin(6y)$ and we want to optimize it over $R = \{(x,y):0\leq x\leq1, 0\leq y\leq 1\}$ (see Figure \ref{fig:nonconvex} for the landscape). The aforementioned instance is captured by our model for $N=M=2$, in which we have essentially projected the space by using one variable for each player (for player one, the second variable is $1-x$ and for player two is $1-y$). The equations of MWU dynamics boil down to the following:
\begin{equation}\label{eq:Pxy}
\begin{array}{l}
x^{t+1} = x^t \frac{1+\epsilon(-8\sin(8x)\sin(6y))}{1+\epsilon x\cdot (-8\sin(8x)\sin(6y))+ \epsilon (1-x)\cdot (8\sin(8x)\sin(6y))}\\
y^{t+1} = y^t \frac{1+\epsilon(6\cos(8x)\cos(6y))}{1+\epsilon y\cdot (6\cos(8x)\cos(6y))+ \epsilon (1-y)\cdot (-6\cos(8x)\cos(6y))}
\end{array}
\end{equation}

We demonstrate in Figure \ref{fig:dynamics} the ``vector field" of MWU dynamics (because it is a discrete time system it is not precisely vector field, at point $(x,y)$ we plot a vector with direction $T(x,y)-(x,y)$, where $T$ is the update rule of dynamics (\ref{eq:Pxy})). The three dots indicate the local maxima of $P$ and the rest of the points do not satisfy the second order KKT conditions. We see that MWU dynamics avoids those points that do not satisfy the second order KKT conditions (avoids those that are not local maxima).

\begin{figure}[h!]
\centering
\includegraphics[width=0.44\textwidth]{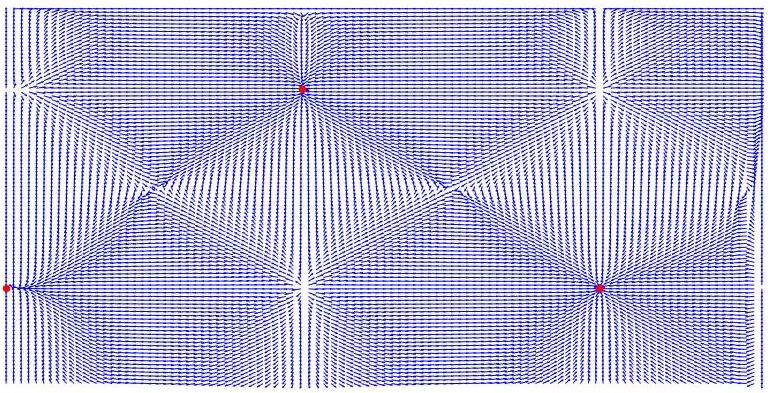}
\caption{Vector field of MWU dynamics in the case of non-concave function $cos(8x)sin(6y)$. Only local maxima (red dots) have positive regions of attraction.}
\label{fig:dynamics}
\end{figure}


\bibliography{example_paper}
\bibliographystyle{plain}
\appendix
\section{Stable manifold theorem}
\begin{theorem}[Center-stable manifold theorem, III.7  \cite{shub1987global}]\label{thm:manifold}
	Let $x^*$ be a fixed point for the $C^r$ local diffeomorphism $g: \mathcal{X} \to \mathcal{X}$. Suppose that $E = E_s \oplus E_u$, where $E_s$ is the span of the eigenvectors corresponding to eigenvalues of magnitude less than or equal to one of $D g(x^*)$, and $E_u$ is the span of the eigenvectors corresponding to eigenvalues of magnitude greater than one of $D g(x^*)$\footnote{Jacobian of function $g$.}. Then there exists a $C^r$ embedded disk $W^{cs}_{loc}$ of dimension $dim(E^s)$ that is tangent to $E_s$ at $x^*$ called the \emph{local stable center manifold}.  Moreover, there exists a neighborhood $B$ of $x^*$, such that $g(W^{cs}_{loc} ) \cap B \subset W^{cs} _{loc}$, and $\cap_{k=0}^\infty g^{-k} (B) \subset W^{cs}_{loc}$.
\end{theorem}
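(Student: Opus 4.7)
The plan is the classical Lyapunov--Perron fixed-point construction: reduce to a local normal form, express a candidate invariant manifold as the graph of a map $h \colon E_s \to E_u$, and obtain $h$ by solving an infinite-horizon summation equation via the contraction mapping principle.

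First, working in a local chart, I would translate so that $x^* = 0$ and write $g(x) = Ax + f(x)$ with $A := Dg(0)$ and $f \in C^r$, $f(0) = 0$, $Df(0) = 0$. Using the spectral projections $P_s, P_u$, block-diagonalize $A$ into $A_s \oplus A_u$ with $\sigma(A_s)$ in the closed unit disk and $\sigma(A_u)$ strictly outside it. The spectral gap lets me fix rates $1 < \eta < \eta' < \min_{\lambda \in \sigma(A_u)}|\lambda|$, and by Gelfand's formula there is a constant $C$ with $\|A_s^n\| \leq C\eta^n$ and $\|A_u^{-n}\| \leq C\eta'^{-n}$ for all $n \geq 0$. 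Using a $C^r$ bump function, replace $f$ by a globally defined $\tilde{f}$ that agrees with $f$ on a ball $B_\delta(0)$ and has global Lipschitz constant at most a small $\mu > 0$ to be chosen; set $\tilde{g}(x) := Ax + \tilde{f}(x)$. Any $\tilde{g}$-orbit that remains inside $B_\delta(0)$ is a $g$-orbit, so constructing a manifold for $\tilde{g}$ suffices.

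Next, for each $y_0 \in E_s$ I look for forward orbits $\{x_n\}_{n \geq 0}$ of $\tilde{g}$ in the weighted Banach space $X_\eta := \{\{x_n\} : \|\{x_n\}\|_\eta := \sup_n \eta^{-n}\|x_n\| < \infty\}$. Splitting $x_n = (y_n, z_n) \in E_s \oplus E_u$ and applying the variation-of-constants formula, such an orbit must satisfy
\begin{equation*}
y_n = A_s^n y_0 + \sum_{k=0}^{n-1} A_s^{n-1-k} P_s \tilde{f}(x_k), \qquad z_n = -\sum_{k=n}^{\infty} A_u^{n-1-k} P_u \tilde{f}(x_k),
\end{equation*}
where the backward summation on the unstable side converges in $X_\eta$ precisely because of the gap $\eta < \eta'$. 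Let $\mathcal{T}_{y_0}$ denote this right-hand side as an operator on $X_\eta$. A direct estimate gives $\mathrm{Lip}(\mathcal{T}_{y_0}) \leq K\mu$ for a constant $K$ depending only on $\eta, \eta', C$; shrinking $\mu$ makes $\mathcal{T}_{y_0}$ a contraction with a unique fixed point $\{x_n(y_0)\}$. Standard smoothness-of-parameters results for contractions (Irwin's $C^r$ section theorem) give that $y_0 \mapsto \{x_n(y_0)\}$ is $C^r$; setting $h(y_0) := z_0(y_0)$ defines a $C^r$ map $h \colon E_s \to E_u$ with $h(0) = 0$ and, by differentiating the fixed-point equation at $0$, $Dh(0) = 0$. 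I then take $W^{cs}_{loc}$ to be the graph of $h$ intersected with a small ball; this is a $C^r$ embedded disk of dimension $\dim E_s$, tangent to $E_s$ at $0$.

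Finally, on a neighborhood $B$ of $0$ small enough that $\tilde{g} \equiv g$ there, I verify both invariance assertions. For $g(W^{cs}_{loc}) \cap B \subset W^{cs}_{loc}$: if $x_0$ lies on the graph with associated Lyapunov--Perron orbit $\{x_n(y_0)\}$, the shifted sequence $\{x_{n+1}(y_0)\}$ is an $X_\eta$-bounded $\tilde{g}$-orbit starting at $g(x_0)$, so uniqueness of the fixed point forces $g(x_0)$ to lie on the graph as well. For $\bigcap_{k \geq 0} g^{-k}(B) \subset W^{cs}_{loc}$: any $x_0$ whose forward orbit is trapped in $B$ has uniformly bounded norm, hence lies in $X_\eta$; by uniqueness that orbit equals $\{x_n(P_s x_0)\}$, forcing $P_u x_0 = h(P_s x_0)$. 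Pulling back through the chart recovers the theorem as stated. The principal obstacle, and the reason the purely hyperbolic Hadamard graph transform does not apply verbatim, is the presence of center eigenvalues on the unit circle: $A_s$ need not be a contraction, so one must work in a weighted sequence space with weight $\eta^n$ chosen strictly inside the spectral gap. This single device simultaneously makes $\mathcal{T}_{y_0}$ a contraction and pins down the unstable component uniquely, while permitting $y_0$ on the stable/center subspace to be arbitrary initial data.
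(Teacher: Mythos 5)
The paper does not prove this theorem; it is quoted verbatim from Shub's \emph{Global Stability of Dynamical Systems}, Theorem III.7, and used as a black box in the proof of Theorem~\ref{thm:main2}. So there is no ``paper proof'' against which to match. Your Lyapunov--Perron construction is, however, one of the two standard routes to this result, and it is a legitimately different one from the Hadamard graph-transform argument that Shub's text actually uses (and which you correctly identify as failing ``verbatim'' in the presence of unit-modulus spectrum). Your architecture is sound: cutoff to a globally small-Lipschitz perturbation, weighted sequence space $X_\eta$, contraction of the Lyapunov--Perron operator $\mathcal{T}_{y_0}$, Irwin's $C^r$ section theorem for smoothness in the parameter $y_0$, and the two invariance assertions both follow by uniqueness of the fixed point in $X_\eta$ (the shift argument for $g(W^{cs}_{loc})\cap B\subset W^{cs}_{loc}$, and the bounded-orbit argument for $\bigcap_k g^{-k}(B)\subset W^{cs}_{loc}$).

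There is one small but genuine slip in the rate bookkeeping that would break your contraction estimate as written. You bound $\lVert A_s^n\rVert\le C\eta^n$ using the \emph{same} $\eta$ that defines the weight $\lVert\{x_n\}\rVert_\eta=\sup_n\eta^{-n}\lVert x_n\rVert$. Then the stable-block convolution gives
\[
\eta^{-n}\Bigl\lVert\sum_{k=0}^{n-1}A_s^{n-1-k}P_s\bigl(\tilde f(x_k)-\tilde f(x_k')\bigr)\Bigr\rVert
\;\le\; \mu\,\eta^{-n}\sum_{k=0}^{n-1}C\eta^{\,n-1-k}\,\eta^{\,k}\,\lVert\{x-x'\}\rVert_\eta
\;=\;\frac{C\mu\,n}{\eta}\,\lVert\{x-x'\}\rVert_\eta,
\]
which is \emph{not} uniformly bounded in $n$, so you do not get $\mathrm{Lip}(\mathcal T_{y_0})\le K\mu$. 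The fix is standard but must be stated: since $\rho(A_s)=1$, pick $1<\bar\eta<\eta<\eta'<\min_{\lambda\in\sigma(A_u)}\lvert\lambda\rvert$ and use Gelfand to get $\lVert A_s^n\rVert\le C\bar\eta^{\,n}$; then the sum telescopes to $\frac{C\mu}{\eta-\bar\eta}\lVert\{x-x'\}\rVert_\eta$, and the analogous unstable-block estimate gives $\frac{C\mu}{\eta'-\eta}\lVert\{x-x'\}\rVert_\eta$. You need the strict three-way separation $\bar\eta<\eta<\eta'$; a single $\eta$ doing double duty does not suffice. With that repair, your argument is a correct proof of the statement.
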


\section{Preliminaries on Topology}
This section provides fundamentals used in the proof of Theorem \ref{thm:diffeomorphism}. For more information on proper maps and the fundamental group, see  \cite{HCW} and \cite{AH}. 
\begin{definition}
$X$ is a \emph{Hausdorff space} if for every pair of points $p,q\in X$, there are disjoint open subsets $U,V\subset X$ such that $p\in U$ and $q\in V$.
\end{definition}
\begin{definition}
Let $X$ and $Y$ are topological spaces. A map from $X$ to $Y$, denoted $f:X\rightarrow Y$, is called \emph{proper} if the inverse of each compact subset of $Y$ is a compact subset of $X$.
\end{definition}
\begin{example}
Let $X$ be a compact space and $Y$ be a Hausdorff space. And suppose $f:X\rightarrow Y$ is continuous. Then $f$ is a proper map. Furthermore, if $D$ is compact subset of $\mathbb{R}^n$, then a continuous map $f:D\rightarrow D$ is proper.
\end{example}
\begin{definition}
A topological space $X$ is \emph{connected} if there are no disjoint open subsets $U,V\subset X$, such that $U\cup V=X$.
\end{definition}
\begin{definition}
A \emph{path} from a point $x$ to a point $y$ in a topological space $X$ is a continuous function $f:[0,1]\rightarrow X$ with $f(0)=x$ and $f(1)=y$. The space $X$ is said to be \emph{path-connected} if there exists a path joining any two points in $X$. A \emph{homotopy} of paths in $X$ is a family $f_t:[0,1]\rightarrow X$, $0\le t\le 1$, such that
\begin{itemize}
\item The endpoints $f_t(0)=x_0$ and $f_t(1)=x_1$ are independent of $t$.
\item The associated map $F:[0,1]\times [0,1]\rightarrow X$ defined by $F(s,t)=f_t(s)$ is continuous.
\end{itemize}
When two paths $f_0$ and $f_1$ are connected in this way by a homotopy $f_t$, they are said to be \emph{homotopic}. The notation for this is $f_0\simeq f_1$.
\end{definition}
\begin{proposition}
The relation of homotopy on paths with fixed endpoints in any space is an equivalence relation.
\end{proposition}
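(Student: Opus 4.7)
The plan is to verify the three axioms of an equivalence relation (reflexivity, symmetry, transitivity) directly from the definition of homotopy of paths with fixed endpoints. In each case I must produce an explicit homotopy $F:[0,1]\times[0,1]\to X$ and then check two things: (i) $F$ is continuous, and (ii) for every $t$, the endpoints $F(0,t)$ and $F(1,t)$ remain equal to the common endpoints $x_0,x_1$.

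For reflexivity, given a path $f:[0,1]\to X$ from $x_0$ to $x_1$, I will take the constant homotopy $F(s,t):=f(s)$. This is continuous as it ignores $t$, and the endpoint condition $F(0,t)=x_0$, $F(1,t)=x_1$ is immediate. For symmetry, suppose $f_0\simeq f_1$ via a homotopy $F(s,t)$. I will define $G(s,t):=F(s,1-t)$; this is the composition of $F$ with the continuous map $(s,t)\mapsto(s,1-t)$, hence continuous, and it satisfies $G(s,0)=F(s,1)=f_1(s)$ and $G(s,1)=F(s,0)=f_0(s)$, while $G(0,t)=x_0$ and $G(1,t)=x_1$ follow from the corresponding properties of $F$.

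The main obstacle is transitivity, because the natural homotopy obtained by concatenating two homotopies in the time variable is only piecewise defined. Assume $F$ is a homotopy from $f_0$ to $f_1$ and $G$ is a homotopy from $f_1$ to $f_2$. I will define
\begin{equation*}
H(s,t):=\begin{cases} F(s,2t) & \text{if } 0\le t\le \tfrac{1}{2},\\ G(s,2t-1) & \text{if } \tfrac{1}{2}\le t\le 1.\end{cases}
\end{equation*}
On the overlap $t=\tfrac{1}{2}$, both formulas give $F(s,1)=f_1(s)=G(s,0)$, so the two pieces agree. The pieces are continuous on the closed sets $[0,1]\times[0,\tfrac{1}{2}]$ and $[0,1]\times[\tfrac{1}{2},1]$, which cover $[0,1]\times[0,1]$, so by the pasting (closed union) lemma $H$ is continuous on the whole square. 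Finally, $H(s,0)=F(s,0)=f_0(s)$ and $H(s,1)=G(s,1)=f_2(s)$, while $H(0,t)=x_0$ and $H(1,t)=x_1$ hold in both regimes since $F$ and $G$ fix endpoints. This shows $f_0\simeq f_2$, completing the verification that $\simeq$ is an equivalence relation. No additional technical tools beyond elementary continuity of piecewise-defined maps on closed sets are required, and the hypotheses on $X$ (merely being a topological space) are used in full generality.
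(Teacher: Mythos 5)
Your proof is correct and is the standard textbook argument: the constant homotopy for reflexivity, time-reversal for symmetry, and time-concatenation with the pasting lemma for transitivity. The paper itself states this proposition in its topology preliminaries without proof (it is a classical fact, cf.\ Hatcher's \emph{Algebraic Topology}), so there is no proof in the paper to compare against; your argument is precisely the one that would appear if the authors had included it.
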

The equivalence class of a path $f$ under the equivalence relation of homotopy is denoted $[f]$ and called the \emph{homotopy class} of $f$.
\\
Given two paths $f,g:[0,1]\rightarrow X$ such that $f(1)=g(0)$, there is a \emph{product path} $f\cdot g$ that traverses first $f$ and then $g$, defined by the formula
\[
f\cdot g(s)=\left\{
\begin{array}{ll}
f(2s) , \ \ \ \ \ [1\le s\le\frac{1}{2}]
\\
g(2s-1), \ \ \ [\frac{1}{2}\le s\le 1]
\end{array}
\right.
\]
\begin{definition}
The paths with the same starting and ending point are called \emph{loops}, and the common starting and ending point is called the \emph{basepoint}. The set of all homotopy classes $[f]$ of loops $f:[0,1]\rightarrow X$ at the basepoint $x_0$ is denoted $\pi_1(X,x_0)$.
\end{definition}
\begin{proposition}
$\pi_1(X,x_0)$ is a group with respect to the product $[f][g]=[f\cdot g]$. 
\end{proposition}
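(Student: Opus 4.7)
The plan is to verify the four group axioms on $\pi_1(X,x_0)$ in turn: well-definedness of the product, associativity, existence of an identity, and existence of inverses. Every step reduces to writing down an explicit homotopy between two loops given by piecewise linear reparametrizations of the unit interval; the continuity of each homotopy follows from the pasting lemma applied to closed subrectangles of $[0,1]\times[0,1]$.

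First I would show the product $[f][g]:=[f\cdot g]$ is well-defined on homotopy classes. Given homotopies $f_t\simeq f_t'$ and $g_t\simeq g_t'$ through loops based at $x_0$, define $F_t := f_t\cdot g_t$ using the same splitting of $[0,1]$ at $s=\tfrac12$. The resulting map $(s,t)\mapsto F_t(s)$ is continuous on the two closed rectangles $[0,\tfrac12]\times[0,1]$ and $[\tfrac12,1]\times[0,1]$ and agrees on their overlap (both equal $f_t(1)=g_t(0)=x_0$ since these are loops at $x_0$), so the pasting lemma gives a homotopy $f\cdot g\simeq f'\cdot g'$.

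Next I would handle the constant-loop identity. Let $c_{x_0}$ denote the constant loop at $x_0$. To show $[f]\cdot[c_{x_0}]=[f]$ I would exhibit a homotopy between $f\cdot c_{x_0}$ and $f$ that linearly interpolates between traversing $f$ on $[0,\tfrac12]$ at double speed and traversing $f$ on $[0,1]$ at normal speed, i.e.\ the standard ``slow down'' homotopy $H(s,t)=f\!\left(\tfrac{2s}{1+t}\right)$ for $s\le\tfrac{1+t}{2}$ and $H(s,t)=x_0$ otherwise. The argument for $[c_{x_0}]\cdot[f]=[f]$ is symmetric. For inverses, given a loop $f$ put $\bar f(s)=f(1-s)$ and construct the homotopy
\begin{equation*}
H(s,t)=\begin{cases} f(2s(1-t)) & 0\le s\le\tfrac12,\\ f(2(1-s)(1-t)) & \tfrac12\le s\le 1,\end{cases}
\end{equation*}
which deforms $f\cdot\bar f$ to the constant loop $c_{x_0}$; again continuity is verified by the pasting lemma on the two closed halves.

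The only step with real content is associativity, $(f\cdot g)\cdot h\simeq f\cdot(g\cdot h)$, since the two loops differ in how the interval $[0,1]$ is partitioned among the three segments: $[0,\tfrac14],[\tfrac14,\tfrac12],[\tfrac12,1]$ versus $[0,\tfrac12],[\tfrac12,\tfrac34],[\tfrac34,1]$. I would write an explicit homotopy that linearly slides the two interior breakpoints from $(\tfrac14,\tfrac12)$ to $(\tfrac12,\tfrac34)$ as $t$ varies from $0$ to $1$, and verify on each of the three resulting closed trapezoidal regions in $[0,1]\times[0,1]$ that the formula is continuous and agrees with the neighboring piece along shared edges. I expect this construction of the reparametrization homotopy, together with the bookkeeping needed to apply the pasting lemma cleanly, to be the main technical obstacle; the remaining axioms are comparatively routine once the machinery of piecewise-linear homotopies is in place.
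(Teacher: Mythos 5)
Your proof outline is correct and is the standard textbook argument for showing the fundamental group is a group. However, the paper does not actually prove this proposition: it appears in the appendix's ``Preliminaries on Topology'' as a background fact, stated without proof and implicitly deferred to the cited references (e.g.\ Hatcher's \emph{Algebraic Topology}). So there is no proof in the paper to compare against; your reparametrization homotopies (the ``slow-down'' homotopy $H(s,t)=f\bigl(\tfrac{2s}{1+t}\bigr)$ for the identity, the reversal homotopy for inverses, and the sliding-breakpoint homotopy for associativity), each made rigorous via the pasting lemma, are exactly what the reference would supply, and your account is complete and correct.
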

And the group $\pi_1(X,x_0)$ is called the \emph{fundamental group}.
\begin{definition}
A topological space $X$ is \emph{simply connected} if it is path-connected and has trivial fundamental group.  
\end{definition}
\begin{example}
The space $D$ that is a product of simplexes is simply-connected. 
\end{example}
The following theorem (used in the proof of Theorem \ref{thm:diffeomorphism}) gives a sufficient and necessary condition under which a local homeomorphism $f:X\rightarrow Y$ becomes a global homeomorphism.
\begin{theorem}[Theorem 2, \cite{HCW}]
Let $X$ be path-connected and $Y$ be simply-connected Hausdorff spaces. A local homeomorphism $f:X\rightarrow Y$ is a global homeomorphism of $X$ to $Y$ if and only if the map $f$ is proper.
\end{theorem}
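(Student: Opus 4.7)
The plan is to prove the two directions separately, with the forward direction being essentially bookkeeping and the backward direction the substantive content. For the forward direction, assume $f : X \to Y$ is a global homeomorphism. Then $f^{-1} : Y \to X$ is continuous, so for any compact $K \subset Y$ the preimage $f^{-1}(K)$ is the continuous image of the compact set $K$ under $f^{-1}$, hence compact. Thus $f$ is proper.

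For the backward direction, assume $f$ is a proper local homeomorphism. The strategy is to upgrade $f$ first to a covering map and then, using simple-connectedness of $Y$, to a global homeomorphism. First, I would note that $f$ is an open map (as any local homeomorphism is), so $f(X)$ is open in $Y$. Because $f$ is proper and $Y$ is Hausdorff, $f$ is also a closed map (properness into a Hausdorff space implies closedness), so $f(X)$ is closed. Since $Y$ is simply connected it is in particular connected, and since $X$ is path-connected it is nonempty, so $f(X) = Y$; thus $f$ is surjective.

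Next I would show that for every $y \in Y$ the fiber $f^{-1}(y)$ is finite and that $f$ is a covering map. Because $f$ is a local homeomorphism, each fiber is discrete in $X$; because $\{y\}$ is compact and $f$ is proper, $f^{-1}(y)$ is compact; a discrete compact set is finite, so write $f^{-1}(y) = \{x_1,\dots,x_n\}$. Choose pairwise disjoint open neighborhoods $U_i \ni x_i$ on which $f$ restricts to a homeomorphism onto open sets $V_i \ni y$. The set $C := X \setminus \bigcup_i U_i$ is closed, and properness implies $f$ is closed, so $f(C)$ is closed in $Y$ and does not contain $y$. Then $V := \bigl(\bigcap_i V_i\bigr) \setminus f(C)$ is an open neighborhood of $y$ evenly covered by the preimages $f^{-1}(V) \cap U_i$, i.e., $f$ is a covering map.

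Finally I would invoke the classification of coverings of simply connected spaces: if $Y$ is path-connected and simply connected and $f : X \to Y$ is a covering map with $X$ path-connected and nonempty, then $f$ is a homeomorphism. Concretely, lift any loop argument or use that the monodromy action of $\pi_1(Y,y_0) = 1$ on a fiber is trivial, while transitivity of this action on the fiber of a connected covering forces $|f^{-1}(y_0)| = 1$; combined with $f$ being a local homeomorphism and a bijection, this yields that $f$ is a homeomorphism. The main obstacle I expect is the covering-map step: verifying that the candidate evenly covered neighborhood $V$ actually works requires both the closedness of $f$ (to kick out unwanted sheets) and the local-homeomorphism property (to confirm each $f|_{U_i \cap f^{-1}(V)}$ is a homeomorphism onto $V$), and missing either ingredient breaks the argument.
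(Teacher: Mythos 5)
The paper does not prove this theorem; it is stated as a citation from Ho's paper \cite{HCW} and used as a black box in the proof of Theorem~\ref{thm:diffeomorphism}. So there is no in-paper proof to compare against, and your proposal should be judged on its own merits. Your route --- proper local homeomorphism $\Rightarrow$ covering map, then simply connected base $\Rightarrow$ trivial covering $\Rightarrow$ global homeomorphism --- is the standard and correct argument, and to my knowledge also essentially the one in Ho's note. The easy direction and the covering-classification endgame are fine as written.

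Two technical points deserve a sentence of justification, since as stated they are not automatic from the hypotheses. First, the step ``proper $\Rightarrow$ closed'' is not a theorem for arbitrary Hausdorff $Y$; it holds when $Y$ is compactly generated (in particular locally compact or metrizable). One either needs to add that hypothesis, use the Bourbaki definition of proper (universally closed), or observe that in the intended application $Y$ is a subset of Euclidean space, where the implication is valid (for closed $C\subset X$ and compact $K\subset Y$, $f(C)\cap K = f\bigl(C\cap f^{-1}(K)\bigr)$ is compact hence closed in $K$, and closedness in a $k$-space is detected on compacta). Second, when you choose pairwise disjoint $U_i$ around the finitely many fiber points, you are implicitly using that $X$ is Hausdorff --- without it, the line-with-two-origins mapping onto $\mathbb{R}$ is a proper, surjective local homeomorphism from a path-connected space to a simply connected Hausdorff space that is \emph{not} a homeomorphism. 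The theorem's phrasing ``Hausdorff spaces'' (plural) is meant to cover both $X$ and $Y$; you should make that assumption explicit where you invoke it.
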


\end{document}